\documentclass{article}

\usepackage[utf8]{inputenc}
\usepackage{amsmath,amsfonts,amssymb,amsthm,pb-diagram,picinpar,graphicx,color}
\usepackage{algorithm}
\usepackage{amscd}
\usepackage{algorithmic}
\usepackage{colortbl}
\usepackage{enumerate}
\usepackage{hyperref}
\usepackage{bm}
\usepackage{tikz}
\usepackage{pgf}
\usepackage[T1]{fontenc}
\usepackage{enumitem}
\usepackage{comment}
\usepackage{svg}
\usetikzlibrary{shapes,matrix,calc,arrows,decorations.markings,knots,patterns,intersections,cd}

\usepackage{subcaption}
\newcommand{\CC}{\mathbb{C}}
\newcommand{\ZZ}{\mathbb{Z}}
\newcommand{\PP}{\mathbb{P}}

\newcommand{\mcC}{\mathcal{C}}
\newcommand{\mcB}{\mathcal{B}}

\newcommand{\mcL}{\mathcal{L}}

\newcommand{\mcD}{\mathcal{D}}
\newcommand{\mcO}{\mathcal{O}}
\newcommand{\mcR}{\mathcal{R}}
\newcommand{\mcQ}{\mathcal{Q}}

\newcommand{\Kbar}{\overline{K}}

\newcommand{\fd}{\mathfrak{d}}

\newcommand{\bb}{\boldsymbol{b}}

\newcommand{\vc}{\boldsymbol{c}}
\newcommand{\ve}{\boldsymbol{e}}
\newcommand{\vzero}{\boldsymbol{0}}

\newcommand{\divi}{\mathop{\mathrm{div}}\nolimits}

\newcommand{\Supp}{\mathop{\mathrm{Supp}}\nolimits}

\newcommand{\sr}{\mathop{\mathrm{sr}}\nolimits}
\newcommand{\red}{\mathop{\mathrm{r}}\nolimits}

\newcommand{\Red}{\mathop{\mathrm{Red}}\nolimits}

\newcommand{\ord}{\mathop{\mathrm{ord}}\nolimits}

\newcommand{\tot}{\mathrm{t}}

\newcommand{\Cmb}{\mathop{\mathrm{Cmb}}\nolimits}

\newcommand{\I}{\mathop {\mathrm {I}}\nolimits}

\newtheorem{thm}{Theorem}[section]
\newtheorem{lem}[thm]{Lemma}     

\newtheorem{prop}[thm]{Proposition}

\theoremstyle{definition}
\newtheorem{defin}[thm]{Definition}

\newtheorem{ex}[thm]{Example}

\theoremstyle{remark}
\newtheorem{rem}[thm]{Remark}

\renewcommand{\thesubparagraph}{\theparagraph.\@arabic\c@subparagraph}
\usepackage{subcaption}
\begin{document}

\title{\bfseries 
A curve of genus $2$ over $\CC(t)$ and hexads of  bitangents to a smooth quartic}

\author{\bfseries Shinzo BANNAI\footnote{Partially supported by JSPS KAKENHI Grant Number JP23K03042} \, and Hiro-o TOKUNAGA\footnote{Partially supported by JSPS KAKENHI Grant Numbers JP23H00081 and JP24K06673}}

\date{\empty}
\maketitle

\begin{abstract}
In this paper we study the Abel-Jacobi map of a genus 2 curve over $\CC(t)$ arising from a double cover of $\PP^2(\CC)$ branched along six lines. 
We utilize the so-called Mumford representations to conduct the calculations.
As an application, we give an alternative proof of classical results on quartics and bitangents.
Furthermore, we obtain a method to give explicit defining equations of quartic curves that have the six lines as bitangents and apply it to determine the number of connected components of the realization space.
\end{abstract}

\section*{Introduction}

All curves and surfaces in this article are defined over the field of complex numbers $\CC$ unless
otherwise stated.
Let $\mcB$ be a reduced plane curve of even degree and let $D$ be another reduced plane curve having 
no common components with $\mcB$. In our previous papers \cite{bst24, bty25} and the references therein,
we have studied the embedded topology of $\mcB + D$ (or the topology of 
the pair $(\PP^2, \mcB+D)$) via the double cover of $\PP^2$ branched along $\mcB$ and the pull-back of $D$ by the
double cover.
We called such an approach {\it \lq arithmetic of double covers\rq} (cf.\cite{bst24}). Let us explain it more precisely.

Let $f'_{\mcB} : S'_{\mcB} \to  \PP^2$ be a double cover
branched along $\mcB$ and let $\mu : S_{\mcL} \to S'_{\mcL}$ be the canonical resolution (see \cite{horikawa} for the canonical resolution). Choose a point $z_o \in \PP^2\setminus\{\text{singular points of $\mcB$}\}$.  The pencil 
$\overline{\Lambda}_{z_o}$ of lines through
$z_o$ on $\PP^2$ gives rise to a pencil $\Lambda_{z_o}$ of  hyperelliptic curves of genus $\frac 12\deg B - 1$ on $S_{\mcB}$.
Let $\nu_{z_o}: S_{\mcB, z_o} \to S_{\mcB}$ be the resolution of indeterminacy of $\Lambda_{z_o}$. We denote the 
 induced hyperelliptic fibration of genus $\frac 12 \deg B - 1$  by
$\varphi_{\mcB, z_o}: S_{\mcB, z_o} \to \PP^1$. The diagram below explains how we obtain the surfaces $S'_{\mcB}, S_{\mcB}$ and $S_{\mcB, z_o}$: 



\[
\begin{CD}
S'_{\mcB} @<{\mu_{\mcB}}<< S_{\mcB} @<{\nu_{z_o}}<<S_{\mcB, z_o} \\
@V{\pi'_{\mcB}}VV                 @VV{\pi_{\mcB}}V         @VV{\pi_{\mcB, z_o}}V \\
\PP^2@<<{q}< \widehat{\PP^2} @<<{q_{z_o}}< (\widehat{\PP^2})_{z_o},
\end{CD}
\]
where  $q$ is a composition of blowing-ups so that the branch locus of
$\pi_{\mcB}$ becomes smooth and $q_{z_o}$ is a composition of two blowing-ups 
(resp. a blowing-up) if $z_o \in \mcB$ (resp. $z_o \not\in \mcB$) . 
We put $\tilde{\pi}_{\mcB} = \pi'_{\mcB}\circ\mu_{\mcB} = q\circ \pi_{\mcB}$ and
$\tilde{\pi}_{\mcB,z_o} = \pi'_{\mcB}\circ\mu_{\mcB}\circ\nu_{z_o} = q\circ q_{z_o}\circ \pi_{\mcB, z_o}$.
We also denote the covering transformations of the above three double covers by $\sigma_{\pi'_{\mcB}}, \sigma_{\pi_{\mcB}}$ and $\sigma_{\pi_{\mcB, z_o}}$, respectively.
If $z_o \in \mcB$ (resp. $z_o \not\in \mcB$), The exceptional curve of the first blowing-up of $q_{z_o}$ gives rise to a section $O$ 
(resp. sections $O^{\pm}$). 
In this article, we only consider the case of $z_o \in \mcB$. We denote the generic fiber of $\varphi_{\mcB, z_o}$
by $\mcC_{\mcB, z_o}$. The section $O$ gives rise to  a rational point on $\mcC_{\mcB, z_o}$. By abuse of notation,
we also denote this rational point by
$O$.
In order to study the topology of $(\PP^2, \mcB + D)$, we take the pull-back
$\tilde{\pi}_{\mcB}^*D$ or $\tilde{\pi}_{\mcB, z_o}^*D$ and study its geometric
and arithmetic properties. This means for example  the combinatorics of the pull-back and the arithmetic of the induced
divisor on the generic fiber $\mcC_{\mcB, z_o}$. This is what we call \lq {\it arithmetic 
of double covers}'.

In our previous articles such as
e.g., \cite{absst, bty20, bannai-tokunaga20-2, bannai_ohno, bty25},
$S_{\mcB, z_o}$ is a rational elliptic
surface and we have intensively used properties of the group of sections
based on the theory of Mordell-Weil
lattices and so on. In contrast to these cases, in this article, we consider the case where the generic fiber
is a curve of genus $2$. Yet this article can be 
considered as a continuation of our previous articles such as \cite{bty20, bannai_ohno,bty25}. 
In fact, we focus on a smooth quartic and hexads of its bitangents such that twelve tangent points
are on a cubic. Let us explain our object precisely.

Let $\mcQ$ be a smooth plane quartic in $\PP^2$. A bitangent to $\mcQ$ is a 
line tangent to $\mcQ$ at two distinct points or at one point with multiplicity $4$. We call the former ordinary and the latter special (or inflectional bitangent). 
It is well-known that there exist 27 bitangents to $\mcQ$ and they have been
studied by a lot of mathematicians (see \cite[Chapter 6]{dolgachev} and the
references therein). 
In this article, we consider hexads of ordinary bitangents to $\mcQ$ such
that the $12$ tangent points lie on a reduced cubic. Let $\mcQ$ be a smooth quartic
and let $\mcL = \sum_{i = 0}^5L_i$ be the union of six bitangents of $\mcQ$.
$C_3$ denotes the cubic through $\mcQ\cap \mcL$. We assume that
\begin{center}
    ($\ast$) None of the three lines in $\mcL$ are concurrent.
\end{center}

The \lq arithmetic of double covers' framework can be applied to the above setting by taking $\mcL$ (resp. $\mcQ$ ) as $\mcB$ (resp. $D$).
Under these circumstances,
$\mcQ$ gives rise to two horizontal irreducible curves $\mcQ^{\pm}$ meeting a fiber of $\varphi_{\mcL, z_o}$ at $4$ points. 
This means that $\mcQ$ is a splitting curve
with respect to $\tilde{\pi}_{\mcL}$ which is defined
in \cite[Definition 0.1]{tokunaga10}. 
\begin{rem}\label{rem:splitting}
$\mcQ$ is a splitting curve with respect to $\mcL$ if and
only if there exists a cubic through the twelve bitangent points. This follows from the fact that $\mcQ$ is splitting
with respect to $\tilde{\pi}_{\mcL}$ if and only if 
twelve intersection points give rise to a divisor
linearly equivalent to the one induced by $3L_0$. Also, this fact can be confirmed using a criterion for calculating the splitting number given by T.~Shirane in \cite[Theorem2.7]{shirane17}.
\end{rem}

Now let $\mcC_{\mcL, z_o}$ be the 
generic fiber of $\varphi_{\mcL, z_o}$ as above. 
$\mcC_{\mcL, z_o}$ is a curve of genus $2$ over $\CC(t)$. Note that this is a new
feature of this article as we worked with elliptic curves
over $\CC(t)$ in previous articles.
Let $\fd_{\mcQ^{\pm}} = \mcQ^{\pm}|_{\mcC_{\mcL,z_o}}$ be the divisor of degree $4$ on $\mcC_{\mcL, z_o}$. Our main tool to deal with $\fd_{\mcQ^{\pm}}$ is so-called the Mumford representation
of  divisors on a hyperelliptic curve, which was given in \cite{mumfordII} and has been exploited by those working on 
hyperelliptic curve cryptography (see \cite{galbraith, MWZ}). 
Mumford representations allow us to calculate the Abel-Jacobi map of the generic fiber  $\mcC_{\mcL, z_o}$.
Our main purpose is to study reduced plane curves with
the combinatorics $\mcL + \mcQ$  through these calculations enabled by these tools. We start with a set of six lines and construct a quartic $\mcQ$ that have the six lines as bitangents, through the calculations of the Abel-Jacobi map.


The combinatorics of a reduced (possibly reducible) plane curve $C$ is roughly given by the data of the number, degrees, singularities of the irreducible components and how they intersect with each other (see \cite{survey}, \cite{abst2023-1} for the precise definition). Many mathematicians have studied the interplay between the combinatorics of $C$ and the embedded topology of $C$, i.e. the topology of the pair $(\PP^2, C)$ of topological spaces. It is known that the combinatorics determines the embedded topology of $C$ in its regular neighborhood (see \cite{shirane26} for a rigorous proof), but does not determine the global embedded topology. This fact leads to the notion of a {\it Zariski pair} defined by E.~Artal in \cite{artal94}. 
To fully understand Zariski pairs, it is important to study the whole set of curves with a given fixed combinatorial type, which we denote by  $\Cmb_C$. For curves of degree $d$, this set can be realized as the set of defining polynomials of $C$ with combinatorial type $\Cmb_C$ up to a non-zero constant multiple viewed as a subset of $\PP^{\frac{d(d+3)}{2}}$. We call this set the {\it realization space} of curves with combinatorial type $\Cmb_C$ and denote it by $\mcR(\Cmb_C)$. The realization space for line arrangements, conic-line arrangements have been studied \cite{absst, bty25}, and for some cases the number of connected components of the spaces have been determined.


Let $\Cmb_{\mcQ\mcL}$ be the combinatorial type of quartics and hexads of ordinary bitangents with the assumption ($\ast$). We consider the subset of $\mcR(\Cmb_{\mcQ\mcL})$ consisting of curves $\mcQ+\mcL$ such that  a cubic through the 12 points of tangency exists and denote it by
\[
\mcR(\Cmb_{\mcQ\mcL})^{C_3}\subset \mcR(\Cmb_{\mcQ\mcL}) \subset \PP^{\frac{10(10+3)}{2}}=\PP^{65}.
\]
Our first result concerning  $\mcQ+\mcL\in \mcR(\Cmb_{\mcQ\mcL})^{C_3}$ is as follows:

\begin{thm}\label{thm:main1} Choose $\mcQ + \mcL \in \mcR(\Cmb_{\mcQ\mcL})^{C_3}$. Then
either (I) there exists a conic which is tangent to all six bitangents or (II) 
the six bitangents are
    distributed into three pairs such that the intersection points of the two lines in each pair are collinear.
The two cases do not occur simultaneously.
\end{thm}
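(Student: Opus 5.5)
Throughout, I would work with the genus $2$ curve $\mcC_{\mcL,z_o}$ over $\CC(t)$ and the Mumford representations of the degree $4$ divisors $\fd_{\mcQ^{\pm}}$.

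\emph{Setup.} Choose coordinates with $z_o=L_0\cap L_1$, which lies on $\mcL$. Let $L_i=\{l_i=0\}$. The double cover is $w^2=l_0l_1\cdots l_5$. On a line of the pencil through $z_o$, parametrized by $t$, the generic fiber becomes a quintic model $y^2=f_t(x)$ with $\deg_x f_t=5$, after absorbing $l_0l_1$ into the coordinates. The point $O$ is then the point at infinity. Since $\mcQ$ is splitting (Remark~\ref{rem:splitting}), we have $\fd_{\mcQ^+}+\fd_{\mcQ^-}\sim 4O$, and $\fd_{\mcQ^+}$ has a Mumford-type description: there are polynomials $u(x)$ of degree $4$ (the restriction of $\mcQ$ to the fiber line) and $v(x)$ with $\deg v\le 3$ such that $u\mid v^2-f_t$. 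Geometrically, $v$ comes from the cubic $C_3$. Concretely, $\mcQ$ has an equation of the form $q^2-l_0\cdots l_5\cdot(\text{const})=c_3\cdot(\ldots)$, or better $Q\cdot G = c_3^2-\lambda\, l_0\cdots l_5$ with $\deg G=2$. The latter is the classical form coming from the tangency of each $L_i$ at two points lying on $C_3$.

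\emph{Reduction.} The first step is to establish that if $c_3^2-\lambda\prod l_i$ factors as quartic times conic, then the conic $G$ is either (I) a smooth conic, or (II) a product of two lines or a double line. I will check, using the residual divisor $\fd_{\mcQ^+}$ and Mumford reduction, that the reduced representative of $\fd_{\mcQ^+}-2O$ is a degree $\le 2$ divisor $\mathfrak{e}$ cut out by $G$ via the relation $u\,u'=v^2-f_t$, where $u'$ is the restriction of $G$ and has degree $2$.
\begin{itemize}
\item Case (I). Because $G$ restricted to each $L_i$ is a square, namely $c_3^2$ modulo $l_i$, the conic $G$ is tangent to every $L_i$. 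This gives a conic tangent to all six lines.
\item Case (II). If $G=m_1m_2$ with $m_1,m_2$ lines, each $L_i$ must pass through $m_1\cap m_2$ or be tangent in a degenerate sense. This contradicts $(\ast)$ unless one reorganises. So the honest degenerate case is $G=m^2$, a double line. Then $c_3^2-\lambda\prod l_i=Qm^2$, and restricting to $m$ gives $c_3^2|_m=\lambda\prod l_i|_m$.
\end{itemize}

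\emph{Degenerate case and exclusivity.} The main obstacle is the double-line case: I need to extract from it the pairing of the six lines into three pairs whose intersection points are collinear. My plan is to use the Abel--Jacobi map. In this case $\fd_{\mcQ^+}-2O$ becomes $2$-divisible-type data, or more precisely $\fd_{\mcQ^+}$ is linearly equivalent to a combination of Weierstrass points, which are the images of the $L_i$. Comparing Mumford representations, the polynomial $u'=m^2$ forces $\fd_{\mcQ^+}\sim 2O+(P_i+P_j)$-type relations. These relations translate into a decomposition of $\prod l_i$ restricted to $m$ into three pairs. From this I expect the points $L_{i}\cap L_{j}$ of each pair to lie on $m$, which is the desired line. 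I would first test this on an explicit normal form of the six lines before attempting the general argument.

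For exclusivity, if both (I) and (II) held, I would obtain two distinct Mumford representations of the same class $[\fd_{\mcQ^+}-2O]$. Uniqueness of the reduced representative then forces $G$ to be simultaneously a smooth conic and a double line, which is a contradiction. Alternatively, a conic tangent to six lines with three pairwise-intersection points collinear contradicts Brianchon-type constraints together with $(\ast)$.
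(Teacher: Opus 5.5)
Your treatment of the smooth-conic and two-line cases is correct, and it is simpler than the paper's. Restrict $c_3^2-\lambda\prod l_i=QG$ to $L_i$. There $Q|_{L_i}=\alpha q_i^2$ and $c_3|_{L_i}=q_i\ell_i$ with $\ell_i\neq 0$; indeed $L_i\not\subset C_3$, since otherwise $C_3\cdot\mcQ$ would be non-reduced at the tangency points. This gives $G|_{L_i}=\ell_i^2/\alpha$. Consequently a smooth $G$ is tangent to every $L_i$, even at six distinct non-nodal points, because a smooth conic through a node $L_i\cap L_j$ would have to be tangent to both lines there. If $G=m_1m_2$, all six lines pass through $m_1\cap m_2$, contradicting $(\ast)$. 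None of this needs the integrality of Shioda's correction terms, which the paper uses to exclude nodes.

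The genuine gap is the double-line case. You leave it as ``the main obstacle'' with a speculative plan, yet your own identity $c_3|_m^2=\lambda\prod l_i|_m$ finishes it at once. We have $m\neq L_i$ because $G|_{L_i}\neq 0$, so $\prod l_i|_m$ is a nonzero square on $m\cong\PP^1$. Hence every point of $m$ occurs an even number of times among $m\cap L_0,\dots,m\cap L_5$, and by $(\ast)$ at most twice. So these six points coalesce into three distinct nodes $L_i\cap L_j$ lying on $m$, which is (II). This is exactly the paper's argument that $L$ splits and so meets $\mcL$ only at nodes.

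The detour you propose is unnecessary and unsupported. If $\fd_{\mcQ^+}$ were equivalent to a combination of Weierstrass points, then $\fd_{\mcQ^+}-4O$ would be $2$-torsion. Since the paper finds $\red(\fd_{\mcQ^+})$ to be twice the conjugate of the non-Weierstrass point $P_m$ given by $m$, this would force $4(P_m-O)\sim 0$, and you give no reason for that. Moreover, with $z_o=L_0\cap L_1$ a node, the pencil gives a genus-$1$ fibration ($y^2$ equals a quartic in $x$), not the genus-$2$ curve you want.

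The exclusivity argument fails in both versions. There is no ``Brianchon-type'' obstruction. From each of three points on a non-tangent line, draw the two tangents to a conic; the six lines have no three concurrent and satisfy both (I) and (II). The paper's own Example~\ref{ex:caseI} is of this kind. There $t_1=-2=t_2t_3$, and $L_{\infty}\cap L_4=[1,0,0]$, $L_5\cap L_1=[-\tfrac12,-2,1]$, $L_2\cap L_3=[\tfrac12,-2,1]$ all lie on $X+2Z=0$.

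Your ``two Mumford representations'' argument presupposes that the conic in (I) and the line in (II) are the residual conic $G$. Neither condition asserts this. What can be proved, and what the dichotomy of Proposition~\ref{prop:salmon} really is, is exclusivity for $G$ itself. The cubic through the twelve tangency points is unique: two such cubics have constant ratio on $\mcQ$, so a combination of them is divisible by the quartic and hence vanishes. Then $\lambda$ and $G$ are determined, so $G$ cannot be both smooth and a double line. You should state and prove the exclusivity clause in this form.
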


According to G. Salmon
\cite[Article 262]{salmon}, Theorem~\ref{thm:main1} was already known. 
Moreover, for a fixed smooth quartic $\mcQ$, there exist $6048$ hexads of such bitangents $\mcL$ such that
$\mcQ + \mcL \in \mcR(\Cmb_{\mcQ\mcL})^{C_3}$; 1008 hexads of bitangents for (i) and 5040 hexad of bitangents
for (ii).  Theorem~\ref{thm:main1} gives an alternative proof for a part of the statement in \cite{salmon} from
a new point of view.
As there are numerous hexads, even for a fixed quartic $\mcQ$, the above fact in \cite{salmon} depicts the complexity of $\mcR\left(\Cmb_{\mcQ\mcL}\right)^{C_3}$. We will give an explicit description of $\mcR\left(\Cmb_{\mcQ\mcL}\right)^{C_3}$ by using a curve of genus $2$ over $\CC(t)$, which arises from
a double cover of $\PP^2$ ramified along $\mcL$. 
In fact, the Mumford representation of $\fd_{\mcQ^{\pm}}$ 
makes it possible  to describe $\mcR(\Cmb_{\mcQ\mcL})^{C_3}$ with
concrete parameters.
Now we state our second result:
\begin{thm}\label{thm:main2}
The  subset  $\mcR\left(\Cmb_{\mcQ\mcL}\right)^{C_3}$ has exactly two connected components 
$\mcD_i\, (i = 1, 2)$. The two components correspond to the two cases described above. Furthermore, if $\mcB_i \in D_i$ ($i =1, 2$),
$(\mcB_1, \mcB_2)$ is a Zariski pair, i.e, there exists no homeomorphism between $(\PP^2, \mcB_1)$
and $(\PP^2, \mcB_2)$.     
\end{thm}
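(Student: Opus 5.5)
We would prove Theorem~\ref{thm:main2} by parametrizing $\mcR(\Cmb_{\mcQ\mcL})^{C_3}$ explicitly through the genus $2$ curve $\mcC_{\mcL,z_o}$.

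\textbf{Step 1: reduce to a normal form of the six lines.} By ($\ast$), $PGL_3(\CC)$ acts on hexads $\mcL$ of lines in general position, and the configuration space of such hexads is connected. Fix $z_o=L_0\cap L_1$. After a projective transformation, $L_0,L_1$ become coordinate lines and the remaining four lines depend on a few free parameters. The double cover branched along $\mcL$ then gives $\mcC_{\mcL,z_o}: y^2=\prod_i l_i(x,t)$, a curve of genus $2$ over $\CC(t)$ with the rational point $O$.

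\textbf{Step 2: translate the existence of $\mcQ$ into Mumford data.} By Remark~\ref{rem:splitting}, $\mcQ$ is a splitting curve, so $\fd_{\mcQ^+}$ is a degree $4$ divisor on $\mcC_{\mcL,z_o}$ with $\fd_{\mcQ^+}+\fd_{\mcQ^-}$ equal to the pullback of $\mcQ$. We write $\fd_{\mcQ^+}-4O$ in Mumford representation $(u,v)$ with $u\in\CC(t)[x]$ of degree $\le 4$ and $v^2\equiv f \pmod u$. The key point is that $\mcQ^+$ corresponds to a function $y - g(x,t)$, with $g$ of bounded degree, whose divisor is $\fd_{\mcQ^+}$ minus poles. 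Equivalently, $\mcQ$ is given by $g^2-\prod l_i = l_0\cdot(\text{quartic})$ up to normalization. Unwinding this, $\mcQ+\mcL\in \mcR(\Cmb_{\mcQ\mcL})^{C_3}$ iff there is a cubic $C_3$ with $C_3^2-\lambda\prod_{i}L_i=\mcQ\cdot(\text{conic})$ arranged appropriately. In practice we would solve the polynomial identity $g^2-f\equiv 0$ modulo a degree-$4$ $u$ in the parameters. The reduction step (Cantor's algorithm) applied to $\fd_{\mcQ^+}-4O$ should land in one of two possible shapes of reduced divisors. These two shapes correspond to cases (I) and (II) of Theorem~\ref{thm:main1}: one where the linear system gives a conic tangent to all $L_i$, and one where the reduction involves a $2$-torsion-like relation pairing the lines.

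\textbf{Step 3: describe each case as an irreducible family.} For each case we would show that the data are a Zariski open subset of an irreducible variety, namely a product of $PGL_3$ with an affine parameter space.
\begin{itemize}
\item In case (I), the data are a conic together with six tangent lines (tangent lines to a conic are parametrized by $\PP^1$), plus the free coefficients of $g$ solved for explicitly.
\item In case (II), the data are a line $M$, three points on $M$, two lines through each point, and the remaining parameters.
\end{itemize}
Connectedness then follows from irreducibility, since the open conditions (smoothness of $\mcQ$, ordinariness of the tangencies, ($\ast$)) cut out nonempty Zariski opens.

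\textbf{Step 4: conclude.} Theorem~\ref{thm:main1} shows the two families are disjoint and exhaust the space. Both are closed in $\mcR(\Cmb_{\mcQ\mcL})^{C_3}$, since each case is a closed condition on $\mcL$, so they are exactly the two components $\mcD_1,\mcD_2$.

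For the Zariski pair statement we would use a topological invariant distinguishing the cases, such as the splitting type of the six lines relative to double covers branched along $\mcQ$. Concretely, a homeomorphism $(\PP^2,\mcB_1)\to(\PP^2,\mcB_2)$ preserves $\mcQ$ and the lines (by degree considerations). It therefore preserves splitting numbers of subarrangements with respect to covers branched along $\mcQ+L_i+L_j$ and similar curves, which are computed by Shirane's criterion \cite{shirane17}. In case (I) the existence of a conic tangent to all six lines, and in case (II) the collinearity pairing, should produce different splitting data. For example, the pullback of $\mcQ$ to the cover branched along $L_i+L_j+L_k+L_l$ for suitable quadruples splits in one case and not the other.

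The main obstacle is Step 2: proving that the Mumford representation calculation yields \emph{exactly} two irreducible solution families, with no further degenerate branches. We would first attempt it by reducing to a rational normal form and directly solving the coefficient equations with elimination.
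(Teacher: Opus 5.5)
The gap is in the Zariski pair half of the theorem. You never identify an invariant that actually differs between the two families, and the one concrete candidate you give takes the same value on both.

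The component count is fine. Connectedness of each $\mcD_i$ comes, as in the paper, from the six-parameter families of Proposition~\ref{prop:parameter}, so your ``main obstacle'' in Step~2 is already settled by Proposition~\ref{prop:salmon} and Section~\ref{sec:construction}. Your Step~4 separation genuinely differs from the paper and is cleaner. Condition (I) is closed on hexads satisfying $(\ast)$, because six dual points with no three collinear can only lie on a smooth conic. Condition (II) is a finite union of determinantal conditions over the fifteen pairings. Given the exclusivity clause of Theorem~\ref{thm:main1}, each $\mcD_i$ is therefore open and closed. This avoids the paper's implicit reliance on the Zariski pair (or on \cite{shimada22}) to see that $\mcD_1$ and $\mcD_2$ do not lie in one component.

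Your candidate invariant was splitting of $\mcQ$ in the double cover branched along $L_i+L_j+L_k+L_l$. As in Remark~\ref{rem:splitting}, with a conic in place of the cubic, $\mcQ$ splits there if and only if the eight contact points lie on a conic, i.e.\ the tetrad is syzygetic. In both families every tetrad of the hexad is azygetic:
\begin{itemize}
\item In $\mcD_1$ every triad is azygetic. A conic through the eight contact points of a tetrad would pass through the six contact points of each sub-triad, so no tetrad is syzygetic.
\item In $\mcD_2$ any four of the lines contain both lines of one of the pairs $\{L_1,L_2\}$, $\{L_3,L_4\}$, $\{L_5,L_6\}$. A triad containing such a pair is azygetic, since the eight syzygetic triads take exactly one line from each pair.
\end{itemize}
So $\mcQ$ never splits for any quadruple in either case, and this invariant cannot separate $\mcD_1$ from $\mcD_2$.

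The distinction lives at the level of triads. By \cite[Theorem 2.4]{shirane18}, the connected number of $L_i+L_j+L_k$ for the double cover branched along $\mcQ$ is $2$ exactly when the triad is syzygetic. Lemma~\ref{lem:triads} shows that no triad is syzygetic in $\mcD_1$, while exactly the eight transversal triads are syzygetic in $\mcD_2$. The paper proves this by deformation invariance of connected numbers on the connected sets $\mcD_i$, together with a direct computation on Examples~\ref{ex:caseI} and~\ref{ex:caseII}. Until you choose this invariant, or another one that genuinely distinguishes the families, and compute it on both, the Zariski pair statement remains unproved.
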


\begin{rem}
Zariski pairs of curves consisting of a quartic $\mcQ$ and hexad $\mcL$ where the existence of the cubic is not assumed, have been considered in \cite{bannai_ohno}, \cite{shimada22}. In particular, the number of 
components of the realization space for a smooth quartic $\mcQ$ and hexads of bitangents $\mcL$ is determined in \cite{shimada22}.  In fact, it is known that $\mcR\left(\Cmb_{\mcQ\mcL}\right)$ has 10 connected components, and the subset $\mcR\left(\Cmb_{\mcQ\mcL}\right)^{C_3}$ that we considered is exactly the union of two of the connected components. Yet
our approach to prove Theorem~\ref{thm:main2} through an explicit description of the two connected components with
concrete defining equations is new and we hope that this viewpoint is still worthwhile for this article to be published.
\end{rem}

This article consists of $4$ sections. In Section~\ref{sec:preliminaries}, we explain how we deal with divisors on a hyperelliptic curve via Mumford representations. We prove Theorem~\ref{thm:main1} based
on our approach 
in Section~\ref{sec:genus_2_curve}. We give explicit examples with parameters in Section~\ref{sec:construction} and prove Theorem~\ref{thm:main2} in Section~\ref{sec:proof_theorem}.

\section{Some preliminaries on a hyperelliptic curve of genus \texorpdfstring{$2$}{}}
\label{sec:preliminaries}
%
%
We refer to \cite{galbraith, MWZ, takatoku20-2} and \cite[Appendix]{bst24} as main references.

Let $K$ be a field of characteristic zero and $\Kbar$ be its algebraic closure.
Let $\mcC$ be a hyperelliptic curve of genus $2$ defined over $K$ given by
an equation of the form
\[
\mcC: y^2 = f_{\mcC}(x), \quad f_{\mcC}(x) = x^5 + c_1 x^4 + \ldots+ c_5, \quad c_i \in K,
\]
where $f_{\mcC}(x) = 0$ has no multiple solutions in $\Kbar$. We denote the point at infinity by $O$ and 
put $\mcC^a:= \mcC \setminus\{O\}$.
Let $\Kbar[\mcC] := \Kbar[x, y]/\langle y^2 - f_{\mcC}\rangle$ (resp.
 $K[\mcC]:= K[x, y]/\langle y^2 - f_{\mcC}\rangle)$ be the coordinate ring over $\Kbar$ (resp.
 coordinate ring over $K$) and we write
 $\phi_{\mcC} : \Kbar[x, y] \to \Kbar[\mcC]$ (resp. $\phi_{\mcC, K} :
 K[x, y] \to K[\mcC]$). For $g \in \Kbar[x, y]$, $[g]$ means its  class in 
 $\Kbar[\mcC]$ (The same notation is used for $g \in K[x, y]$). The rational function field over $\Kbar$ (res. over $K$) 
 is denoted by $\Kbar(\mcC)$ (resp. $K(\mcC)$). For $P \in \mcC$,
 we denote the local ring at $P$ by $\mcO_P$. By choosing 
 a local uniformizing parameter, we define 
 the order function at $P$ by $\ord_P$.
 We denote the hyperelliptic involution by $\iota: \mcC \to \mcC, (x, y) \mapsto (x, -y)$.
We write the quotient morphism by 
$\pi : \mcC \to  \mcC/\langle \iota \rangle \cong \PP^1$. For a divisor 
$\fd = \sum_{P\in \mcC}n_P P$, we put $\Supp(\fd) = \{P \in \mcC \mid n_P\neq 0\}$.
\subsection{Semi-reduced divisors and their  reduction}

We here introduce the notion of a semi-reduced divisor, which plays an important role to describe
the addition on the Jacobian of $\mcC$. It is our main tool  to produce 
curves with prescribed properties.

We call a divisor $\fd$ {\it affine} if $O \not\in \Supp(\fd)$.
For an affine effective divisor $\fd$ on $\mcC$,  we have a unique decomposition
$\fd = \fd_{\sr} + \fd_o$ as follows:
\begin{enumerate}
    \item[(i)] there exists
    an effective divisor $\bar{\fd}_o$ on $\PP^1\setminus \{\pi(O)\}$ such that $\fd_o = \pi^*\bar{\fd}_o$
    and
    \item[(ii)] $\fd_{\sr}-\pi^\ast\bar{P}$ is not effective for any $\bar{P}\in\PP^1$.
\end{enumerate}

\begin{defin}\label{def:semi-reduced}
Let $\fd$ be an affine effective divisor on $\mcC$ and let $\fd = \fd_{\sr} + \fd_o$ be
the decomposition as above.
\begin{enumerate}
    \item[(i)]  $\fd$ is said to be \textit{semi-reduced} if $\fd = \fd_{\sr}$.
    \item[(ii)] A semi-reduced divisor is said to be \textit{totally-reduced} (\textit{$\tot$-reduced}, for 
    short) if $\deg \fd \le g$.
\end{enumerate}   
\end{defin}
\begin{rem}\label{rem:reduced}{In \cite{cantor87,  galbraith, MWZ}, a $\tot$-reduced divisor is simply called
\lq reduced\rq. We use the terminology \lq totally-reduced\rq\,  to avoid confusion, since 
 a divisor $\sum_P n_P P$ with $n_P = 1$ for $P \in \Supp(\fd)$ is also said to be reduced
(for example see \cite{iitaka}).
}
\end{rem}

Here are some properties about semi-reduced divisors. 
See \cite{galbraith, MWZ} for details.

\begin{prop}\label{prop:semi-reduced} Under the above settings, we have:
\begin{enumerate}
 \item[\rm(a)] 
For any divisor $\fd$, there exists an algorithm to
 find a semi-reduced divisor $\fd_1$ such that
\begin{enumerate}
\item[{\rm (i)}] $\fd - (\deg \fd)O \sim \fd_1 - (\deg\fd_1)O$, and
\item[{\rm (ii)}] $|\fd| \ge |\fd_1| (= \deg \fd_1)$.
\end{enumerate}

\item[{\rm (b)}] 
For any divisor $\fd$, there exists a
unique $\tot$-reduced divisor $\red(\fd)$ such that
\[
\fd - (\deg\fd) O\sim \red(\fd) - (\deg{\red(\fd)})O.
\]

\end{enumerate}
Here we denote the the linear equivalence of  two divisors by $\sim$ and  we also  put $\deg \fd := \sum_P m_P$ and
 $|\fd| := \sum_P|m_P|$. 

\end{prop}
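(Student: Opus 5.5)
Given an arbitrary divisor $\fd=\sum_P n_P P$, I will first reduce part (a) to the affine effective case and then to the decomposition $\fd=\fd_{\sr}+\fd_o$ introduced above.

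\emph{Step 1: making $\fd$ affine and effective.} I drop the multiple of $O$, i.e. replace $\fd$ by $\fd-n_O O$. This changes neither the class of $\fd-(\deg\fd)O$ nor increases $|\fd|$. Next I write the remaining divisor as $\fd^+-\fd^-$ with $\fd^\pm$ effective and affine. For each point $P=(a,b)$, the function $x-a$ has divisor $P+\iota(P)-2O$. Hence $-P\sim \iota(P)-2O$, and therefore
\[
\fd^+-\fd^- - (\deg\fd)O \sim \fd^+ + \iota(\fd^-) - (\deg\fd^+ + \deg\fd^-)O .
\]
The divisor $\fd^+ + \iota(\fd^-)$ is affine and effective, and its degree equals $|\fd|$ after the $O$-part has been removed.

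\emph{Step 2: removing the fibres of $\pi$.} For the affine effective divisor $\fd'=\fd^+ + \iota(\fd^-)$, I use the decomposition $\fd'=\fd'_{\sr}+\pi^*\bar\fd_o$. Writing $\bar\fd_o=\sum m_j\bar P_j$ with $\bar P_j=\{x=a_j\}$, the function $\prod (x-a_j)^{m_j}$ has divisor $\pi^*\bar\fd_o-(2\deg\bar\fd_o)O$. Consequently $\fd_1:=\fd'_{\sr}$ satisfies (i), and its degree is at most $\deg\fd'\le|\fd|$, which gives (ii). Each step is explicit: splitting the divisor, applying $\iota$, and stripping full fibres. So this is an algorithm, and in Mumford coordinates it amounts to a gcd computation.

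\emph{Step 3: existence in (b).} I use Riemann--Roch on the genus $2$ curve. Given a semi-reduced $\fd_1$ of degree $n>2$, I consider $L((n)O-\fd_1)$ twisted appropriately. More concretely, I choose a nonzero function $h=u(x)+v(x)y$ in $L(kO)$ vanishing on $\fd_1$, where the pole order $k$ is chosen with $\dim L(kO)=k-1>n$ (for $k\ge 3$). Then $\divi h=\fd_1+\fd_2-kO$ with $\deg\fd_2=k-n$. Taking $k=n+2$ gives $\deg\fd_2=2$, though a direct check is needed that $L((n+2)O-\fd_1)\ne0$: its dimension is at least $(n+2)-n-1=1$. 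This yields $\fd_1-nO\sim -\fd_2+2O\sim\iota(\fd_2)-2O$. I then apply Step 2 to $\iota(\fd_2)$, which can only lower the degree, to get a $\tot$-reduced representative. Iterating, or simply applying the argument once, produces $\red(\fd)$.

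\emph{Step 4: uniqueness.} Suppose $\fd_a-aO\sim\fd_b-bO$ with both $\fd_a,\fd_b$ $\tot$-reduced. Then $\fd_a+\iota(\fd_b)-(a+b)O$ is principal, so it equals $\divi h$ with $h\in L((a+b)O)$ and $a+b\le4$. The space $L(4O)$ is spanned by $1,x,x^2$, because $y$ has pole order $5$. So $h$ is a polynomial in $x$, and its zero divisor is a sum of full fibres $\pi^*\bar P$. Semi-reducedness of $\fd_a$ and $\fd_b$ then forces each fibre to be split as $Q$ from $\fd_a$ and $\iota(Q)$ from $\iota(\fd_b)$, the only exception being a Weierstrass point, which cannot occur twice in a semi-reduced divisor. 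Hence $\fd_a=\fd_b$.

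The main obstacle is this last bookkeeping in Step 4. It is a careful multiplicity argument at ramification points, where semi-reducedness means the multiplicity is at most $1$, and at pairs $Q,\iota(Q)$; everything else is routine.
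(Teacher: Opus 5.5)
Your proof is correct. The paper gives no argument of its own and simply defers to Galbraith and Menezes--Wu--Zuccherato, and your argument is essentially the standard one found there: for (a) you trade $-P$ for $\iota(P)-2O$ via $\divi(x-a)$ and strip full fibres of $\pi$, and for uniqueness in (b) you use $L(4O)=\langle 1,x,x^2\rangle$ to force $\fd_a=\fd_b$.

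The only variation is in existence for (b). You obtain it non-constructively from Riemann--Roch on $L((n+2)O-\fd_1)$; harmlessly, the residual degree-$2$ part of $\divi h$ may contain $O$ there, which you strip as in Step 1. The paper instead uses the Cantor reduction step ($v^2-f_{\mcC}=c\,u\,u'$, $v'=-v \bmod u'$), which it writes out in the subsection on degree-$4$ semi-reduced divisors and uses to actually compute $\red(\fd)$.
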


\begin{rem}
    The reduction process from $\fd$ to $\red(\fd)$ given in the above Proposition  amounts to the calculation of the Abel-Jacobi map on $\mcC$.
\end{rem}

\begin{prop}\label{prop:mumford}{
Let $\fd = \sum_{P \in \mcC}n_P P$ be a semi-reduced divisor 
 on $\mcC$. There exists a unique pair
 $(u_{\fd}, v_{\fd})$ of polynomials $u_{\fd}, v_{\fd} \in
 \Kbar[x]$ as follows:
\begin{itemize}
    \item If $\fd = 0$,  then $u_{\fd} = 1, v_{\fd} = 0$. In particular, $u_{\fd}$ 
    is monic.
    \item If $\fd \neq 0$, then $u_{\fd}$ and $v_{\fd}$ satisfy the three conditions below:
    \begin{enumerate}
        \item[\rm{(i)}] $u_{\fd} = \prod_{P \in \Supp(\fd)}(x - x_P)^{n_P}$. 
        \item[\rm{(ii)}] $\deg v_{\fd} < \deg u_{\fd}$ and  $\ord_{P}([y - v_{\fd}]) \ge n_P$ for $\forall P \in \Supp(\fd)$.
  \item[\rm{(iii)}] $v_{\fd}^2 - f_{\mcC}$ is divisible by $u_{\fd}$.

    \end{enumerate}
\end{itemize}
Moreover $\fd$ can be recovered from  $(u_{\fd}, v_{\fd})$ by 
\[
\fd = \sum_{P \in \mcC^a} \min\{\ord_P([u_{\fd}]), \ord_P([y- v_{\fd}])\}P.
\]
}    
\end{prop}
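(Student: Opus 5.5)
We have to prove Proposition~\ref{prop:mumford}: existence and uniqueness of the pair $(u_{\fd}, v_{\fd})$, and recovery of $\fd$ from it. Write $\fd=\sum_{P}n_P P$, semi-reduced and affine. Define $u_{\fd}$ by condition (i). This is well defined because $\fd$ is semi-reduced: the points of $\Supp(\fd)$ over a given $\bar P\in\PP^1$ are either a single non-ramified point $P=(x_P,y_P)$ with $y_P\ne0$ (its conjugate $\iota P$ is then absent), or a ramification point $(x_P,0)$ with $n_P=1$. Indeed, if $n_P\ge 2$ at a Weierstrass point, or if $P$ and $\iota P$ both occur, then $\fd-\pi^*\bar P$ would be effective. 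So distinct support points have distinct $x$-coordinates whenever $y_P\neq 0$, and $u_{\fd}$ is a monic polynomial of degree $\deg\fd$.

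\textbf{Construction of $v_{\fd}$.} I would construct $v_{\fd}$ locally and glue by the Chinese remainder theorem. Fix $P\in\Supp(\fd)$ with $y_P\neq 0$. Then $x-x_P$ is a local uniformizer at $P$, and $y$ has a power series expansion $y=\sum_k a_k(x-x_P)^k$ in $\widehat{\mcO}_P$, obtained as a branch of $\sqrt{f_{\mcC}}$ with $a_0=y_P$. Take $v_P$ to be the truncation of this series modulo $(x-x_P)^{n_P}$. Then $\ord_P([y-v_P])\ge n_P$. Moreover, since $y+v_P$ is a unit at $P$ and
\[
(y-v_P)(y+v_P)=f_{\mcC}-v_P^2,
\]
we get $(x-x_P)^{n_P}\mid f_{\mcC}-v_P^2$ in $\Kbar[x]$. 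At a Weierstrass point, where $n_P=1$, take $v_P=0$. Here $\ord_P(y)=1$, and $(x-x_P)\mid f_{\mcC}$.

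The CRT then gives a unique $v_{\fd}$ with $\deg v_{\fd}<\deg u_{\fd}$ and $v_{\fd}\equiv v_P \bmod (x-x_P)^{n_P}$ for every $P$. Conditions (ii) and (iii) follow, since $\ord_P([y-v_{\fd}])\ge\min\bigl(\ord_P([y-v_P]),\,\ord_P([v_P-v_{\fd}])\bigr)\ge n_P$, and similarly for the divisibility.

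\textbf{Uniqueness.} Uniqueness is the step needing care. Suppose $v$ and $v'$ both satisfy (ii). Then $\ord_P([v-v'])\ge n_P$ at each support point. If $y_P\neq0$, this says $(x-x_P)^{n_P}\mid v-v'$. If $P$ is a Weierstrass point, $x-x_P$ has order $2$ at $P$, but $n_P=1$, so order $\ge1$ of the polynomial $v-v'$ forces $v(x_P)=v'(x_P)$. Hence $u_{\fd}\mid v-v'$, and the degree bound gives $v=v'$. Uniqueness of $u_{\fd}$ is immediate from (i). The case $\fd=0$ is by convention.

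\textbf{Recovery formula.} For $Q\in\mcC^a$, compute $\min\{\ord_Q([u_{\fd}]),\ord_Q([y-v_{\fd}])\}$. If $Q\notin\pi^{-1}\pi(\Supp\fd)$, then $\ord_Q([u_{\fd}])=0$. If $Q\in\Supp(\fd)$ with $y_Q\neq0$, then $\ord_Q([u_{\fd}])=n_Q\le\ord_Q([y-v_{\fd}])$. For the conjugate $\iota Q$, which is not in $\Supp(\fd)$, we have $[y-v_{\fd}](\iota Q)=-y_Q-y_Q\neq0$, since $v_{\fd}(x_Q)=y_Q$; so the minimum is $0$. At a Weierstrass point $Q\in\Supp\fd$, $\ord_Q([u_{\fd}])=2$, while $\ord_Q([y-v_{\fd}])=1$, because $\ord_Q(y)=1$ and $\ord_Q(v_{\fd})\ge2$ (as $v_{\fd}(x_Q)=0$). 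So the minimum is $1=n_Q$.

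The main obstacle is the bookkeeping at Weierstrass points, where $x-x_P$ is not a uniformizer. This is exactly where semi-reducedness ($n_P\le1$ there) is used.
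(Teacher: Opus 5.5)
Your proof is correct. The paper gives no proof of this proposition and defers to Galbraith and Menezes--Wu--Zuccherato; your argument is essentially the standard one from those sources: truncate the expansion of $y$ in the uniformizer $x-x_P$ at non-Weierstrass points, take $v_P=0$ at Weierstrass points (where semi-reducedness forces $n_P=1$), glue by the Chinese remainder theorem, and obtain uniqueness from $u_{\fd}\mid v-v'$ together with the degree bound.
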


By Proposition~\ref{prop:mumford}, there is a bijection between 
the set of  semi-reduced divisors $\fd$ and that of pairs  $(u_{\fd}, v_{\fd})$ of polynomials. Namely, if a pair $(u, v)$ of polynomials in $\Kbar[x]$ satisfying 
(i), (ii) and (iii) as above are given, then the divisor given by the above form
is a semi-reduced divisor which we  denote it by $\fd(u, v)$. This leads us to
the following definition.

 \begin{defin} \label{def:mumford}{\rm
 Let $\fd$ be a semi-reduced divisor on $\mcC$. The pair $(u_{\fd}, v_{\fd})$ of
 polynomials in Proposition~\ref{prop:mumford} is called the \textit{Mumford representation} of $\fd$.
 }
 \end{defin}

\subsection{Semi-reduced divisors and Gr\"obner bases}

Let $\fd = \sum_P m_P P$ be an affine effective divisor. Define
\begin{eqnarray*}
    I(\fd) & := & \{ \xi \in \Kbar[\mcC] \mid \ord_P(\xi) \ge m_P (\forall P \in \Supp(\fd)) \} \\
    \widetilde{I(\fd)} &:= & \phi_{\mcC}(I(\fd)) = \{g \in \Kbar[x, y] \mid [g] \in I(\fd) \}.
\end{eqnarray*}

Let $\fd$ be an affine effective divisor and let $\fd = \fd_{\sr} + \fd_o$ be 
the decomposition in Definition~\ref{def:semi-reduced}. Then we we have polynomials
$u_o, u_{\sr}$ and $v_{\sr}$ in $\Kbar[x]$ satisfying the conditions below:
\begin{enumerate}
    \item[(i)] $I(\fd_o) = \langle [u_o] \rangle$.
    \item[(ii)] $(u_{\sr}, v_{\sr})$ is the Mumford representation of $\fd_{\sr}$ and $I(\fd_{\sr}) = \langle [u_{\sr}], [y - v_{\sr}]\rangle$
    \item[(iii)] $I(\fd) = \langle [u_ou_{\sr}], [u_o(y - v_{\sr})]\rangle$
\end{enumerate}
As for the reduced Gr\"obner basis of $\widetilde{I(\fd)}$, we have

\begin{itemize}
    \item $\{u_o, y^2 - f_{\mcC}\}$ if $\fd = \fd_o$,
    \item $\{u_{\sr}, y - v_{\sr}\}$ if $\fd = \fd_{\sr}$, and
    \item $\{u_ou_{\sr}, u_o(y - v_{\sr}), y^2 - f_o\}$, $f_o \equiv f_{\mcC} (\bmod u_ou_{\sr})$ otherwise.
\end{itemize}
See \cite{takatoku20-2} or  \cite[Appendix]{bst24} for details. We should note that the conditions (a), (b) and (c) in 
\cite[p.101]{bst24} hold only for a monomial ordering such that
the leading monomial of $y^2 - f_{\mcC}$ is $y^2$.

\begin{rem} By \cite[Lemma 10.3.10]{galbraith} or the uniqueness of  reduced Gr\"ober bases, if $\fd$ is a divisor defined over $K$, we see that $u_o, u_{\sr}, v_{\sr} \in K[x]$. 
\end{rem}

\subsection{Semi-reduced divisors of degree \texorpdfstring{$4$}{}}

Let $\fd$ be a semi-reduced divisor on $\mcC$ and let $(u_{\fd}, v_{\fd})$ be its
Mumford representation. We first note that $\fd - 4O \not\sim 0$. In fact, suppose that there
exists $\xi \in \Kbar[\mcC]$ such that $\divi(\xi) = \fd - 4O$. We may assume $\xi$ is given
of the form $[a_0(x) + a_1(x)y]$. As $-\ord_O\xi = \max\{2\deg a_0, 2\deg a_1 + 5\}$, we see that  $\xi$ is given
by $[a_0(x)]$, where $a_0 \in \Kbar[x], \deg a_0 = 2$. This is impossible as
$\fd$ is semi-reduced.

By its definition,  $\deg u_{\fd} = 4$, $v_{\fd} = 0$ or $\deg v_{\fd} \le  3$. 
Write $v_{\fd}^2 - f_{\mcC} = c u_{\fd}u_1$, where $c\in \Kbar$ and $u_1$ is monic.
Now put $v_1 = - v_{\fd} (\bmod u_1)$. Then $\red(\fd) = \fd(u_1, v_1)$.

Conversely, if a $\tot$-reduced divisor $\fd$ with Mumford representation $(u_{\fd}, v_{\fd})$ is given, then we can find a semi-reduced divisor $\tilde{\fd}$ of degree $4$
in the following way:

\begin{enumerate}
    \item  Define $\tilde{v}: = r(x)u_{\fd} - v_{\fd}$, where $r(x) \in \Kbar[x]$ is chosen
    so that 
  \[
  \deg \tilde{v} = \left \{ \begin{array}{cc}
                             2 & \text{if $\deg \fd = 1$} \\
                             3 & \text{if $\deg \fd = 2$}
                             \end{array} \right .
  \]
  \item put $\tilde{v}^2 - f_{\mcC} = c\tilde{u}u_{\fd}$, where $c \in \Kbar$ and
  $\tilde{u}$ is monic.
\end{enumerate}
Then $\tilde{\fd}:= \fd(\tilde{u}, \tilde{v})$ is a semi-reduced divisor of degree $4$
and $\red(\tilde{\fd}) = \fd$.

\begin{rem}\label{rem:deg4_K_div}
{From the above consideration, we have the following facts: 
\begin{itemize}
    \item If $\fd$ is a  $K$-divisor, then $\red(\fd)$ is also a $K$-divisor.
    \item If  a $t$-reduced divisor $\fd$ is defined over $K$ and $r(x) \in K[x]$, then 
    a semi-reduced divisor of degree $4$ obtained as above is also defined over $K$.
\end{itemize}
}
\end{rem}

\section{A curve of genus \texorpdfstring{$2$}{} over \texorpdfstring{$\CC(t)$}{} and
six lines}
\label{sec:genus_2_curve}

\subsection{Some basic facts}\label{sec:basic_facts}

Let $\varphi: S \to \PP^1$ be a smooth projective surface with
a fibration of genus $2$ curves over $\PP^1$ and let $\mcC_S$ be the generic
fiber of $\varphi$. $\mcC_S$ is a curve of genus $2$ over $K = \CC(t)$, $t$ being an
inhomogeneous coordinate. Here we assume that $\mcC_S$ is 
given by
\[
\mcC_S: y^2 = f_{\mcC_S}(t, x), \, f_{\mcC_S}(t, x) = x^5 + c_1(t)x^4 + \ldots + c_5(t), 
\]
where $c_i(t) \in \CC[t], \, \deg c_i(t) \le i + 1\,  (1\le i \le 5)$. Also
we assume that 
\begin{center}
  the irregularity of $S$ is zero,\, i.e., $S$ is a regular surface.  
\end{center}

\begin{rem} The above condition is satisfied if $S$ is birational to a double cover
of $\PP^2$ branched along a plane curve of even degree with at most simple singularities.
As for the irregularity of a double cover or a cyclic cover in general, see \cite{catanese-ciliberto}.    
\end{rem}

Let $D$ be a horizontal
divisor on $S$ and let  $\fd_D$ be the divisor on $\mcC_S$ given by $D|_{\mcC_S}$. Then 
$\fd_D$ is a $K$-divisor on $\mcC_S$.  Assume that $\deg \fd_D > 2$. There
exists a unique $\tot$-reduced divisor $\red(\fd_D)$ such that 
\[
\fd_D - \deg(\fd_D)O \sim \red(\fd_D) - \deg(\red(\fd_D))O.
\]
Since $\red(\fd_D)$ is also a $K$-divisor, there exists a 
horizontal divisor $\red(D)$ corresponding to $\red(\fd_D)$ (see (\cite[Proposition 3.4, Chap. 8]{qing_liu}) and there exists a rational function $\xi \in K[\mcC_S]$ such that 
  $\divi(\xi) = \fd_D + \iota^*\red(\fd_D) - \deg(\fd_D + \red(\fd_D))O$.
    
As $\xi$ can be considered  as an element in $\CC(S)$ and $S$ is a regular surface, we have
\[
D - (D\cdot F)O \sim \red(D) - (\red(D)\cdot F)O + \text{curves in fibers}.
\]
In order to describe this relation more precisely, we use the results
 in \cite{shioda-higher-genus1, shioda-higher-genus2}.


With the notation in \cite[pp. 360- 361]{shioda-higher-genus2} or similarly
 in \cite[Lemma 5.1]{shioda90}, we can rewrite the above relation as follows:
\[
D - (D\cdot F)O \sim \red(D) - (\red(D)\cdot F)O +
nF + \sum_{v\in \Red(\varphi)}{\mathrm{Corr}_v}(D),
\]
where $n = D\cdot O -\red(D)\cdot O - (D\cdot F  - \red(D)\cdot F)O^2$ and 
\[
\mathrm{Corr}_v(D) = [\Theta_{v,1}, \ldots, \Theta_{v, m_v-1}] \bb_v,
\quad 
\bb_v = \left [\begin{array}{c}
b_{v,1} \\
\vdots \\
b_{v, m_v-1}
\end{array}
\right ]  =  A_v^{-1}
(\vc(v, D) - \vc(v, \red(D)).
\]
Here $A_v$ is the Gram matrix $[\Theta_{v,i}\cdot\Theta_{v, j}]$ and 
\[
\vc(v, \bullet) = 
\left [\begin{array}{c}
\bullet\cdot\Theta_{v, 1} \\
\vdots \\
\bullet\cdot\Theta_{v, m_v-1}
\end{array}
\right ]
\]
for a divisor $\bullet$.

\begin{rem}\label{rem:coefficents}
 Note that $b_{v,i} \in \ZZ$. Since entries of $A_v^{-1}$ are not necessarily 
 integers, this fact gives some constraint on how $D$ and $\red(D)$ intersect with
 $\Theta_{v, i}$'s.
\end{rem}


In order to describe the type of singular fibers, we use the notation in
\cite{namikawa_ueno73}. In this article, there are three possibilities and with suitable
labels for irreducible components $\Theta_{v,i}$ (see the figures below), the corresponding $-A_v$ are as follows:


\begin{figure}[h]

\centering

\begin{subfigure}[b]{0.32\textwidth}
\centering
\begin{tikzpicture}[scale=0.7, line cap=round]
\draw[gray, thick] (0.5,0) -- (3,0);
\node[right] at (3.3,0) {$O$};

\draw[gray, thick] (1.75,4.0) -- (1.75,-0.2);
\node[below] at (2.5,-0.3) {$\Theta_0$};
\draw[gray, thick]
(1.5,3.1)
.. controls (4.6,3.1) and (4.5,1.6)
.. (1.5,1.4);
\node[right] at (4.0,1.7) {$\Theta_1$};
\end{tikzpicture}
\caption{Type $\I_{2-0-0}$}
\end{subfigure}
\hfill
\begin{subfigure}[b]{0.32\textwidth}
\centering 
\begin{tikzpicture}[scale=0.7, line cap=round]
\draw[gray, thick] (0.5,0) -- (3,0);
\node[right] at (3.3,0) {$O$};

\draw[gray, thick] (1.75,5.0) -- (1.75,-0.2);
\node[below] at (2.5,-0.3) {$\Theta_0$};
\draw[gray, thick]
(1.5,4.4)
.. controls (4.3,4.3) and (4.2,3.0)
.. (1.5,2.8);
\node[right] at (4.0,3.5) {$\Theta_1$};
\draw[gray, thick]
(1.5,2.4)
.. controls (4.6,2.5) and (4.5,1.0)
.. (1.5,0.8);
\node[right] at (4.0,1.7) {$\Theta_2$};
\end{tikzpicture}
\caption{Type $\I_{2-2-0}$}
\end{subfigure}
\hfill
\begin{subfigure}[b]{0.32\textwidth}
\centering 
\begin{tikzpicture}[scale=0.7, line cap=round]


\draw[gray, thick] (0.5,0) -- (3,0);

\node[right] at (3.3,0) {$O$};


\draw[gray, thick] (1.0,1.2) -- (2.2,-0.3);

\node[below] at (2.3,-0.4) {$\Theta_0$};


\foreach \y/\lab in {4.5/1,4.0/2,3.5/3,3.0/4,2.5/5}
{
\draw[gray, thick] (1.0,\y) -- (2.2,\y-1.0);
\node[left] at (0.7,\y) {$\Theta_{\lab}$};
}

\draw[gray, thick] (1.3,0.5) -- (1.3,5.0);
\node[right] at (1.5,5.0) {$\Theta_6$};
\end{tikzpicture}
\caption{Type $\I^*_{0-0-0}$}

\end{subfigure}
\caption{Possible singular fibers}
\label{figure:singular_fibers}
\end{figure}

\begin{table}[h]
\centering
\begin{tabular}{|c|c|c|c|} \hline
   Type   &   $\I_{2-0-0}$ & $\I_{2-2-0}$ & $\I^*_{0-0-0}$ \\ \hline
   $-A_v^{-1}$ &   $\left [ \frac 12 \right ]$     &   $\displaystyle{\left [\begin{array}{cc}
                     \frac 12 &  0 \\
                      0 & \frac 12
                    \end{array} \right ]}$ & $\displaystyle{
          \left [ \begin{array}{cccccc}
                    1 & \frac 12 & \frac 12 &\frac 12 & \frac 12 & 1 \\
                    \frac 12 & 1 & \frac 12& \frac 12 &  \frac 12 & 1  \\
                    \frac 12 & \frac 12 & 1 & \frac 12 & \frac 12 & 1  \\
                    \frac 12 & \frac 12 & \frac 12 & 1 & \frac 12 & 1 \\
                    \frac 12 & \frac 12 &\frac 12 & \frac 12 & 1 & 1 \\
                    1 & 1 & 1 & 1 & 1 & 2
                    \end{array}
                    \right ]}$  \\ \hline 
\end{tabular}
\caption{List of $-A_v$}
\label{table:gramm_matrix}
\end{table}
Now we summarize the  possibilities of  $\vc(v, \red(D))$ for our later use.
Let $F_v =  \Theta_{v, 0}+ \sum_{i=1}^{m_v - 1}\mu_{v, i}\Theta_{v,i}$ be a singular fiber of type either $\I_{2-0-0}, \I_{2-2-0}$
or  $\I^*_{0-0-0}$.
Since $\red(D)\cdot F_v \le 2$, $\red(D)$ meets either reduced components of $F_v$ at at
most two points  or a component with $\mu_{v, i}= 2$ at one point.
Hence we have the following list:


\begin{table}[h]
\centering
\begin{tabular}{|c|c|} \hline
The type of $F_{v}$ & $\vc(v, \red(D))$ \\ \hline
$\I_{2-0-0}$ &  $\ve_1, \, 2\ve_1$ \\ \hline
$\I_{2-2-0}$ & $\ve_i (i =1, 2), \, \, 2\ve_i (i = 1, 2), \,\,  \ve_1 + \ve_2$ \\ \hline
$\I^*_{0-0-0}$ &  $\ve_6, \,\, \ve_i \, (1 \le i \le 5),\,\,   2\ve_i\, (1 \le i \le 5), 
\ve_i + \ve_j, \,  (1 \le i <  j \le 5)$ \\ \hline
\end{tabular}
\caption{List of $\vc(v, \red(D))$}
\label{table:contribution}
\end{table}

Here, $\ve_i \in \ZZ^{\oplus (m_v-1)}$ denotes the vector whose $i$-th entry is $1$ and all other entries are $0$.
Note that $\mu_{v, i} = 2$ only for $\Theta_6$ in $\I_{0-0-0}^*$ and $\mu_{v,i} = 1$ for the others. 


\subsection{Six lines and a curve of genus \texorpdfstring{$2$}{} over \texorpdfstring{$\CC(t)$}{}}

Let $\mcL$ be the union of six lines in $\PP^2$ with the condition $(\ast)$ in 
the Introduction. Let $S'_{\mcL}$ be 
the double cover branched along $\mcL$. Let $S_{\mcL}$ and  $S_{\mcL, z_o}$ be 
the surfaces as in the Introduction for $\mcB = \mcL$.
In this subsection, we give a more detailed description for $S'_{\mcL}, S_{\mcL}$
and $S_{\mcL, z_o}$. 
Let $[T, X, Z]$ be homogeneous coordinates of $\PP^2$. Let
$L_{\infty} (= L_0), L_i \, (1 \le i \le 5)$ be $6$ lines in $\PP^2$ given 
by defining equations:
\[
 L_{\infty}: l_\infty =Z  =  0, \quad L_i : l_i = X + a_iT + b_iZ = 0 \, (1\le i \le 5).
\]
 Choose a point $z_o \in L_{\infty}\setminus \cup_{i=1}^5L_i$.  A pencil 
$\overline{\Lambda}_{z_o}$ of lines through
$z_o$ gives rise to a pencil $\Lambda_{z_o}$ of curves of genus $2$ on $S_{\mcL}$.
Let $\nu_{z_o}: S_{\mcL, z_o} \to S_{\mcL}$ be the resolution of indeterminacy of $\Lambda_{z_o}$. We denote the 
fibration of genus $2$ curves induced by $\Lambda_{z_o}$ by
$\varphi_{\mcL, z_o}: S_{\mcL, z_o} \to \PP^1$. We denote its generic
fiber by $\mcC_{\mcL,z_o}$, which is a curve of genus $2$ over $\CC(t)$. 




By our construction, there exists a one-to-one correspondence between singular fibers and lines in $\overline{\Lambda}_{z_o}$
that pass through the nodes of $\mcL$. More precisely, the latter consists
of lines $l$ so that
\begin{enumerate}
    \item[(i)] $l$ passes through one node of $\mcL$, 
    \item[(ii)] $l$ passes through two nodes of $\mcL$ and 
    \item[(iii)] $l = L_{\infty}$.
\end{enumerate}

For each case, the type of the corresponding  singular fiber of $\varphi_{\mcL, z_o}$ is as follows:

\begin{center}
    Case (i) $\I_{2-0-0}$, \quad Case (ii) $\I_{2-2-0}$, \quad  Case (iii) $\I^*_{0-0-0}$.
\end{center}

\subsection{Proof of Theorem~\ref{thm:main1}}\label{subsubsec:double_cover_hexad}

Now assume that $L_{\infty}$, $L_i (i = 1, \ldots, 5)$ are bitangents of 
a smooth quartic $\mcQ$ and there exists a cubic $C_3$ such that
$C_3\cap \mcQ$ are the $12$ tangent points between $\mcQ$ and $\mcL$. In this case,
there exists a conic $C_2$ such that $\mcQ + C_2$ is a member of a pencil generated by $2C_3$ and $\mcL$. 
We consider how $C_2$ intersects with $\mcL$.

We first note that none of  $L_{\infty}, L_i \, \,(i = 1, \ldots, 5)$ is  an irreducible component of $C_2$. In fact, if say $L_{\infty}$ is an irreducible
component of $C_2$, then it is also that of $C_3$. This means that
$C_3$ is tangent to $\mcQ$ at $L_{\infty}$. As we assume that $C_3$ meets $\mcQ$
at $12$ distinct points, this is impossible.
Hence we infer that $C_2$ is either
\begin{enumerate}
    \item[(a)] a smooth conic,
    \item[(b)] two distinct lines $L + L'$, $L, L' \neq L_{\infty}, L_i (i = 1, \ldots , 5)$, or
    \item[(c)] a double line $2L$, $L \neq L_{\infty}, L_i (i = 1,\ldots, 5)$.
\end{enumerate}
In order to study how $C_2$ meets $\mcL$, we take homogeneous coordinates of 
$\PP^2$ in the following way:

Choose $z_o \in L_{\infty}\setminus (\mcQ\cup C_3\cup C_2)$. We also choose
homogeneous coordinates so that $z_o = [0, 1, 0]$ and let $t = T/Z, x = X/Z$ be
affine coordinates. Then we may assume that $\mcL, \mcQ$ and  $C_3$  are given by the following affine
equations:
\begin{eqnarray*}
   \mcL:  f_{\mcL}(t,x) & = & \prod_{i=1}^5(x + a_it + b_i) = 0,  \\
    \mcQ: u_{\mcQ}(t,x)  & = & x^4 + \ldots = 0,  \\
    C_3 : v_{C_3}(t, x) & = & c_0x^3+ c_1(t)x^2 + \ldots = 0, c_0 (\neq 0) \in \CC \\
    C_2 : u_{C_2}(t, x) & = & x^2 + \ldots = 0, 
\end{eqnarray*}
where these defining equations satisfy
\[
(\star) \quad v_{C_3}^2 - c_0^2u_{\mcQ}u_{C_2} = f_{\mcL}.
\]
We also define $v_{C_2}$ by $v_{C_2} = v_{C_3} (\bmod\, u_{C_2})$. 
With the these equations, $\mcC_{\mcL, z_o}$ is given by $y^2 = f_{\mcL}$.
Also, from $(\star)$, both $\mcQ$ and $C_2$ are splitting with respect to $\tilde{\pi}_{\mcL}$,  i.e.,the pullbacks $\tilde{\pi}_{\mcL}^*\mcQ$, and $\tilde{\pi}_{\mcL}^*C_2$ are 
of the form 
\[
\tilde{\pi}_{\mcL}^*\mcQ = \mcQ^+ + \mcQ^-, \quad \tilde{\pi}_{\mcL}^*C_2 = C_2^+ + C_2^- + E,
\]
where $\sigma_{\tilde{\pi}_{\mcL}}^*(\mcQ^+)  = \mcQ^-, \sigma_{\tilde{\pi}_{\mcL}}^*(C_2^+)= C_2^-$ and  $E$ is a divisor such that $\Supp(E)$ is contained  in the exceptional set of $\mu_{\mcL}$. Now we go on to consider the above three cases (a), (b) and (c).

\medskip

\underline{Case (a).}
Let $\fd_{\mcQ^+}$ and $\fd_{C_2^+}$ be the divisor on $\mcC_{\mcL, z_o}$ determined
by $Q^+$ and $C_2^+$ respectively. Here we label $Q^+$ and $C_2^+$ 
so that the Mumford representation of $\fd_{\mcQ^+}$ and $\fd_{C_2^+}$ are given by
$(u_{\mcQ}, v_{C_3})$ and $(u_{C_2}, v_{C_2})$, respectively. Then
\[
I(\fd_{\mcQ^+}) = \langle [u_{\mcQ}], [y - v_{C_3}]\rangle, \,\, I(\fd_{C_2^+}) = \langle [u_{C_2}], [y - v_{C_2}]\rangle,  \, \,
I(\fd_{\mcC_2^-}) = \langle[u_{C_2}], [y + v_{C_2}]\rangle.
\]
Also we have $I(\fd_{C_2^+}) = \langle [u_{C_2}], [y - v_{C_2}] \rangle = \langle [u_{C_2}], [y - v_{C_3}]\rangle$.
Since $[v_{C_3} - y][v_{C_3} + y] = [v_{C_3}^2 - f] = [c_0^2u_{\mcQ}u_{C_2}]$, we see 
$\divi([y - v_{C_3}]) = \fd_{Q^+} + \fd_{C_2^+} - 6O$, i.e., $\red(\fd_{\mcQ^+}) = \iota^*\fd_{C_2^+} = \fd_{C_2^-}$. This means
$\red(\mcQ^+) = C_2^-$ and we have 
\[
\mcQ^+ - 4O \sim C_2^- - 2O + 4F + \sum_{v \in \Red(\varphi_{\mcL, z_o})} {\mathrm{Corr}}_v(\mcQ^+), \quad \mathrm{Corr}_v(\mcQ^+) = [\Theta_{v,1}, \ldots, \Theta_{v, m_v-1}] \bb_v
\]
where $\bb_v = A_v^{-1}(\vc(v, \mcQ^+)-\vc(v, C_2^-))$. 

Since $\tilde{\pi}_{\mcL}^*(C_2) = C_2^+ + C_2^- + E$, i.e., $C_2$ is split,
the intersection multiplicity between $C_2$ and $\mcL$ is even for each point
in $C_2\cap \mcL$. Hence we see that $p \in C_2\cap\mcL$ is either a node of
$\mcL$ or $p$ is a smooth point of  $\mcL$ and $C_2$ is tangent to $\mcL$ at $p$.


Now suppose that there exists a node $p \in C_2\cap \mcL$. Let $F_v$ be the singular fiber arising from a line passing through $z_o$ and $p$, and 
let  $\Theta$ be an irreducible component of the singular fiber $F_v$ which arises from $p$. 
Namely, $\Theta$ is either
\[
\Theta_{v,1} \quad \text{if the type of $F_v$ is $\I_{2-0-0}$}, \quad 
\Theta_{v,i}\, (i = 1, 2) \quad \text{if the type of $F_v$ is $\I_{2-2-0}$},
\]
or 
\[
\Theta_{v,i}\, (1 \le i \le 5) \quad \text{if the type of $F_v$ is $\I^*_{0-0-0}$}.
\]
By looking into the process of the canonical resolution $\mu_{\mcL}: S_{\mcL} \to S'_{\mcL}$, we see $C_2^{\pm}\Theta = 1$. 
Summing up, we have Table~\ref{tab:contribution2}, where the same notation as Table~\ref{table:contribution} is used.
    \begin{table}[h]
        \centering
        \begin{tabular}{|c|c|c|} \hline
            The type of $F_v$ & $\vc(v, \mcQ^+)$ & $\vc(v, C_2^-)$  \\ \hline
            $\I_{2-0-0}$ & $\vzero$ & $\ve_1$ \\ \hline 
            $\I_{2-2-0}$ & $\vzero$ & $\ve_i$ $(i = 1, 2)$, $\ve_1 + \ve_2$ \\ \hline
            $\I^*_{0-0-0}$ & $2\ve_6$ & $\ve_i + \ve_j$ $(1 \le i < j \le 5)$\\ \hline
        \end{tabular}
        \caption{List of $\vc(v, \mcQ^+)$ and $\vc(v, C_2^-)$}
        \label{tab:contribution2}
    \end{table}

Note that the value of $\vc(c, C_2^-)$ for type $\I_{0-0-0}^\ast$ must be $\ve_i+\ve_j$ as $\Theta_6$ is included in the branch locus and $C_2$ does not intersect with the branch locus transversely. 
Hence the correction part ${\mathrm{Corr}}_v(\mcQ^+)$ arising from $F_v$ contains 
$\frac 12 \Theta$ or $\frac 32 \Theta$ .  
By Remark~\ref{rem:coefficents}, this is impossible.
Hence $C_2$ is tangent to $\mcL$ at six distinct points.

\medskip

\underline{Case (b).} 
Since both $L$ and $L'$ split, both $L$ and $L'$ do not intersect
any smooth points of $\mcL$. On the other hand, 
both $L$ and $L'$ do not pass any node
of $\mcL$ by the same reason as in Case (a). Hence this case does not occur.

\medskip

\underline{Case (c).} Since $\tilde{\pi}_{\mcL}^*L = L^+ + L^- + E$, $L$ does not intersect $\mcL$ at any smooth point of $\mcL$. Hence $L$ meets $\mcL$ at
three nodes.

The above argument implies  the following proposition:

\begin{prop}\label{prop:salmon}{{\rm \cite[Article 262]{salmon}} Let $\mcQ$ be a smooth plane quartic and let
$L_0, \ldots, L_5$ be a hexad of  bitangents of $\mcQ$ such that
\begin{enumerate}
    \item[\rm (i)] $\sharp(\mcQ\cap(\cup_{i=0}^6L_i))= 12$ and
    \item[\rm (ii)] there exists a cubic $C_3$ through $12$ tangent points.
\end{enumerate}
Then there exists a conic $C_2$ satisfying either (I) $C_2$ is tangent to $\cup_{i=0}^6L_i$ at six distinct points or (II) $C_2$ is a double line $2L$ such
that $L$ passes through three nodes of $\mcL$. 
}
\end{prop}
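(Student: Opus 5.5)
The proof assembles the case analysis just carried out, so the plan is mainly to organize it and check that the three cases are exhaustive. First I construct $C_2$. By hypothesis (ii) the cubic $C_3$ passes through the twelve tangent points, and each bitangent meets $\mcQ$ with multiplicity $2$ at each of its two tangent points. So the divisor $\mcL|_{\mcQ}$ equals $2(C_3|_{\mcQ})$ as divisors on $\mcQ$. Hypothesis (i) guarantees that the twelve points are distinct, so this is a genuine equality of reduced data counted twice. Since $\mcQ$ is a smooth plane curve, it is projectively normal, and the restriction $H^0(\mcO_{\PP^2}(6))\to H^0(\mcO_{\mcQ}(6))$ is surjective with kernel $u_{\mcQ}\cdot H^0(\mcO_{\PP^2}(2))$. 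Hence, after scaling, $f_{\mcL}=v_{C_3}^2-c_0^2u_{\mcQ}u_{C_2}$ for some conic $C_2$. This is the relation $(\star)$ in the chosen affine coordinates with $z_o\in L_\infty$ generic. The identity immediately shows that $\mcQ$ and $C_2$ are splitting curves for $\tilde{\pi}_{\mcL}$. It also shows that $C_2$ has no component in common with $\mcL$: a common line would be a component of $C_3$, and then $C_3$ would contain a bitangent and could not cut out exactly the twelve points.

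Next I split into the three possible shapes of $C_2$: smooth, a pair of distinct lines, or a double line. In each case I use the splitting property to say that every point of $C_2\cap\mcL$ is either a node of $\mcL$ or a point of even contact.

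In the smooth case I argue as in Case (a). The function $[y-v_{C_3}]$ has divisor $\fd_{\mcQ^+}+\fd_{C_2^+}-6O$, so $\red(\fd_{\mcQ^+})=\fd_{C_2^-}$. The Shioda-type formula of Section~\ref{sec:basic_facts} then gives correction terms $\bb_v=A_v^{-1}(\vc(v,\mcQ^+)-\vc(v,C_2^-))$. If $C_2$ passed through a node of $\mcL$, Table~\ref{tab:contribution2} together with Table~\ref{table:gramm_matrix} would produce a coefficient $\tfrac12$ or $\tfrac32$ in $\bb_v$. This contradicts the integrality in Remark~\ref{rem:coefficents}. So all six intersections of $C_2$ with the lines are tangencies at smooth points of $\mcL$. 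They are six distinct points, since distinct lines meet off the nodes only at nodes.

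In the case of two distinct lines $L+L'$, each line is split on its own. By the same integrality argument, which should be checked to work line by line using the divisor of $[y-v_{C_3}]$ restricted appropriately, neither line passes through a node. A line cannot be tangent to another line, so it would then have to avoid $\mcL$ entirely, which is absurd. In the double-line case $2L$, the line $L$ is split and cannot be tangent to any $L_i$, so it must meet $\mcL$ only at nodes. Since $L$ meets six lines with total multiplicity six and each node absorbs two of the lines, $L$ passes through exactly three nodes; here condition $(\ast)$ ensures the nodes are ordinary.

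The main obstacle is the integrality step. It needs a careful identification of how $C_2^{\pm}$ and $\mcQ^+$ meet the fibre components after the canonical resolution. In particular I must verify that $\vc(v,\mcQ^+)=\vzero$ at fibres of types $\I_{2-0-0}$ and $\I_{2-2-0}$, because $\mcQ$ avoids the nodes. I also need the fibre at $L_\infty$ to contribute only a full-integer correction, so that it does not cancel the half-integer coming from the node. For the reducible case (b), the additional point is to justify that each line is individually split and to rerun the argument with its own Mumford representation.
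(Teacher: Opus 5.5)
Your outline follows the paper's proof: the conic $C_2$ from the pencil spanned by $2C_3$ and $\mcL$ (your restriction-to-$\mcQ$ argument is a fine justification of $(\star)$), the trichotomy smooth conic / two lines / double line, $\red(\fd_{\mcQ^+})=\fd_{C_2^-}$ from $\divi([y-v_{C_3}])$ with half-integrality of $\bb_v$ in Case (a), and the node count in Case (c). Two of the obstacles you list are easy:
\begin{itemize}
\item $\vc(v,\mcQ^+)=\vzero$ at fibres of type $\I_{2-0-0}$ and $\I_{2-2-0}$. A node of $\mcL$ on $\mcQ$ would be a smooth point of $\mcQ$ at which two different lines are tangent, which is impossible.
\item Nothing can ``cancel'' between fibres. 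Intersecting the integral relation with the $\Theta_{v,j}$ determines each $\bb_v$ separately, so each must be integral on its own. The $L_\infty$ fibre matters only when the node lies on $L_\infty$, and there $A_v^{-1}(2\ve_6)$ is integral while $A_v^{-1}(\ve_i+\ve_j)$ is not.
\end{itemize}

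The step that fails as planned is Case (b). There is no relation for each line separately. The only linear equivalence is $\red(\fd_{\mcQ^+})=\fd_{C_2^-}$ with $C_2^-=L^-+L'^-$, so ``rerunning the argument with each line's own Mumford representation'' has nothing to apply to; the integrality test must be run on the sum. Moreover, that test does not show that neither line passes through a node. If $p=L\cap L'$ is a node, the vector at its fibre is $2\ve_1$ (or $2\ve_i$ at $L_\infty$), and $\bb_v$ is integral there.

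The repair is to use the count first.
\begin{itemize}
\item Each line is split, since $f_{\mcL}|_L=v_{C_3}^2|_L$. Hence, exactly as in your Case (c), each passes through precisely three nodes.
\item $L$ meets $L'$ in one point and $L_\infty$ in one point, so some node $p\in L$ lies on neither.
\item Since $p\notin L_\infty$, the fibre through $p$ has type $\I_{2-0-0}$ or $\I_{2-2-0}$. Since $z_o\notin L$, the line $L$ meets only the component coming from $p$.
\item So $\vc(v,\mcQ^+)=\vzero$, and $\vc(v,C_2^-)$ is $\ve_1$ or $\ve_1+\ve_2$ (the latter if $L'$ passes through the other node).
\item Either way this forces a coefficient $\tfrac12$, which is the contradiction you need.
\end{itemize}
This is what the paper's appeal to ``the same reason as in Case (a)'' amounts to.
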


Now Theorem~\ref{thm:main1} is immediate.


 \section{An explicit construction of a smooth plane quartic with a  preassigned hexad of bitangents}
 \label{sec:construction}


Based on Proposition~\ref{prop:salmon}, there are two cases to be considered as follows:

\begin{itemize}
    \item[(I)] There exists a conic $C_o$ tangent to $\mcL$ at 
    $6$ distinct points.
    \item[(II)] There exists a line $L$  that passes through  
    $3$ distinct nodes of $\mcL$.  
\end{itemize}
For each case, we construct a smooth quartic $\mcQ$ such that $\mcQ + \mcL \in \mcR(\Cmb_{\mcQ\mcL})^{C_3}$.

\underline{Case (I).}  We take new homogeneous coordinates $[T, X, Z]$ of 
$\PP^2$ as follows:

\begin{enumerate}
    \item[(i)] The conic $C_o$ inscribed in  $\mcL$ is given by $XZ - T^2 = 0$ and
    $z_o = [0,1,0]$.
    \item[(ii)] Three of the tangents $L_{\infty}, L_1$ and $L_2$ of $C_o$ are given by
    \[
    L_{\infty}: l_{\infty} = Z = 0, \quad L_4: l_4 = X = 0, \quad L_5: l_5 = X - 2T + Z = 0.
    \]
\end{enumerate}
Note that we can transform any conic and its three tangent lines to the above $C_o$, $L_{\infty}$, $L_1$, $L_2$ in a unique way. Now we choose three points $p_i = (t_i, t^2_i)$  $(i = 1, 2, 3)$ and we denote
the tangent lines at $p_i$ ($i = 1, 2, 3)$ by $L_i: l_i= X - 2t_iT + t_i^2Z = 0$.

Then the generic fiber $\mcC_{\mcL, z_o}$ is given by
\[
\mcC_{\mcL, z_o} : y^2 = l_4l_5\prod_{i=1}^3 l_i = x(x - 2t +1)\prod_{i=1}^3(x - 2t_it + ,t_i^2),
\]
and by our choice of $z_o$, $C_o$ gives rise to a $\CC(t)$-rational point $P_{C_o} = (t^2, t(t-1)\prod_{i=1}^3(t - t_i))$. 

 Under these circumstances, we may assume that the curves $\mcL, C_3$ and
 $C_2$ in Section~\ref{subsubsec:double_cover_hexad} are given as follows:
\begin{eqnarray*}
 \mcL: f_{\mcL}(t, x) & = &l_4l_5\prod_{i=1}^3 l_i =  x(x -2t + 1)\prod_{i=1}^3(x - 2t_it + t_i^2) = 0, \\
 C_3:  v_{C_3}(t, x)  &=&  (a_0 + a_1 t)x^2 + \ldots, = 0, a_0, a_1 \in \CC \\ 
 C_2 = C_o: u_{C_2}(t, x) & = & x - t^2 = 0, 
\end{eqnarray*} 
and we put $v_{C_2^+}(t,x) = t(t-1)\prod_{i=1}^3(t - t_i)$. Note that $C_3$ passes 
through $[0, 1, 0]$.
Since $\red(\mcQ^+) = C_2^-$ and the Mumford representation of $\fd_{C_2^-}$
is $(u_{C_2}, - v_{C_2})$, $v_{C_2} =  v_{C_3} (\bmod \, u_{C_2})$. Hence
we see that there exists a polynomial $q_1(t, x)$ such that
\[
v_{C_3}(t, x) = q_1(t, x)(x - t^2) +  t(t-1)\prod_{i=1}^3(t - t_i).
\]
Here $q(t, x)$ is  of the form $(a_0 + a_1t)x + b(t)$, $b(t) \in \CC[t]$. As $v_{C_3}(t, x)$ is of degree $3$, we have 
\[
a_1 = 1, \quad b(t) = b_0 + b_1t  - (t_1 + t_2 + t_3 + 1)t^2 + t^3, b_i \in \CC (i = 0,1).
\]
By $(\star)$ in Section~\ref{subsubsec:double_cover_hexad}, $c_0 = 1$ and
we obtain $\mcQ$ with $u_{\mcQ} = 0$, where
\[
u_{\mcQ}(t, x) = -\frac {v_{C_3}^2 - f_{\mcL}}{x - t^2}.
\]





\underline{Case (II).}  We take new homogeneous coordinates $[T, X, Z]$ of 
$\PP^2$ as follows:

\begin{enumerate}
    \item[(i)] 
    We choose $L_3, L_4, L_5, L_{\infty}$ so that
    \[
    L_3: l_3 = X + 2T + Z = 0, \,\,L_4: l_4 = X - 2T + Z = 0, \,\, L_5: l_5 = X = 0, \,\,
    L_{\infty}: l_{\infty} = Z = 0.
      \]
      $L_{\infty}$ meets $L_{5}$ at $[1, 0, 0]$ and $L_3$ meets $L_4$ at $[0, -1, 1]$.
    \item[(ii)] We choose the line $L$ in Proposition~\ref{prop:salmon} as
    $X + Z = 0$. This line connects $[1, 0, 0]$ and $[0, -1, 1]$. Choose a point $p = [s, -1, 1]$ $(s \neq 0)$ on $L$ and consider two line $L_1$ and $L_2$ through $p$:
    \[
    L_1: l_1 = X - r_1(T - sZ) + Z = 0, \quad L_2: l_2 = X - r_2(T - sZ) + Z = 0.
    \]
    \item[(iii)] $z_o = [0,1,0]$ and $z_o$ is not any tangent point in $\mcQ\cap \mcL$.
\end{enumerate}
Note that any $6$ lines with no concurrent three lines and with collinear $3$ nodes are obtained as above. 
$\mcC_{\mcL,z_o}$ is given by
\[
\mcC_{\mcL, z_o} : y^2 = l_3l_4l_5\prod_{i=1}^2 l_i = x(x - 2t + 1)(x +2t+1)\prod_{i=1}^2(x - r_i(t - s) + 1),
\]
and $L$ gives a $\CC(t)$-rational point $P_L = (-1, 2\sqrt{r_1}\sqrt{r_2}t(t-s))$.
As $I(P_L) = \langle [x + 1], [y - 2\sqrt{r_1}\sqrt{r_2}t(t-s)]\rangle$, we infer
that
\[
I(2P_L) = \langle [(x + 1)^2], [y - v_{2L}]\rangle,
\]
where 
\[
\frac {1}{\sqrt{r_1}\sqrt{r_2}}t\{(r_1r_2s - r_1r_2t - r_1 - r_2)x + r_1r_2t- r_1r_2s  - r_1 - r_2\}.
\]
Thus we see that $((x+1)^2, v_{2L})$ is the Mumford representation of $2P_L$.

Under these circumstances, we may assume that those curves $C_3$ and
 $C_2$ in Section~\ref{subsubsec:double_cover_hexad} are given as follows:
\begin{eqnarray*}
 \mcL: f_{\mcL}(t, x) & = & l_3l_4l_5l_1l_2 = x(x - 2t + 1)(x +2t+1)\prod_{i=1}^2(x - r_i(t - s) + 1) = 0, \\
 C_3: v_{C_3}(t, x) & = & x^3+ \ldots = 0  \\
 C_2 = C_o: u_{C_2}(t, x) & = & (x+1)^2 = 0, 
\end{eqnarray*} 
and $v_{C_2} = v_{2P_L}$ as $\red(\mcQ^+) = 2L^-$.  Likewise Case (I),
we infer that  there exists a polynomial $q_2(t, x)$ such that
\[
v_{C_3}(t, x) = q_2(t, x)(x+ 1)^2 + v_{2P_L}. 
\]
As $v_{C_3}$ is of degree $3$ in $\CC[t,x]$, we infer that $q_2(t, x) = a_0x + b_0 + b_1t$ for
some $a_0, b_0, b_1 \in \CC$. Again by
$(\star)$ in Section~\ref{subsubsec:double_cover_hexad}, $c_0 = 1$ and we obtain $\mcQ$ as $u_{\mcQ} = 0$, where
\[
u_{\mcQ}(t, x) = \frac{v_{C_3}^2 - f_{\mcL}}{(x+1)^2}.
\]

Note that any smooth quartic with a hexad of bitangents for both Cases (I) and (II) is obtained
as above.

Summing up our observation in this section, we have the following:

\begin{prop}\label{prop:parameter}{For both of Cases (I) and (II),  
$\mcQ + \mcL$ are parameterized with $6$ parameters 
$\{t_1, t_2, t_3, a_0, b_0, b_1\}$ (resp. $\{r_1, r_2, s, a_0, b_0, b_1\}$) for
Case (I) (resp. Case (II)) up to projective 
transformation.
}
\end{prop}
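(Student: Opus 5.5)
The plan is to show that the constructions of Section~\ref{sec:construction} give a surjection from a parameter space onto $\mcR(\Cmb_{\mcQ\mcL})^{C_3}$ modulo $PGL_3$, and that in each case the parameter space is six-dimensional. I will treat Case (I); Case (II) is entirely parallel.

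\textbf{Step 1 (normalization).} Start from $\mcQ+\mcL\in\mcR(\Cmb_{\mcQ\mcL})^{C_3}$ in Case (I). By Proposition~\ref{prop:salmon} there is a conic $C_o=C_2$ tangent to all six lines. A conic together with three of its tangent lines and a point $z_o$ can be moved by a unique projective transformation to $XZ-T^2=0$ with tangents $Z=0$, $X=0$, $X-2T+Z=0$. The remaining three tangents are then $X-2t_iT+t_i^2Z=0$, which gives the parameters $t_1,t_2,t_3$. One needs to check that $z_o=[0,1,0]$ may be taken on $L_\infty$, away from $\mcQ\cup C_3\cup C_2$. This can be arranged after a finite relabeling, since generic points of $L_\infty$ avoid these finitely many curves.

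\textbf{Step 2 (determining $v_{C_3}$).} The argument in Case (a) of Section~\ref{subsubsec:double_cover_hexad} gives $\red(\fd_{\mcQ^+})=\fd_{C_2^-}$. Hence $v_{C_3}\equiv v_{C_2}\pmod{u_{C_2}}$ up to the choice of sign, which is fixed by labeling $\mcQ^{\pm}$. Moreover $v_{C_2}$ is forced to be the $y$-coordinate of the rational point $P_{C_o}$. So $v_{C_3}=q_1(t,x)(x-t^2)+t(t-1)\prod_{i=1}^3(t-t_i)$.

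Imposing total degree $3$ in $(t,x)$ forces $q_1=(a_0+a_1t)x+b(t)$ with $\deg b\le 3$. Cancelling the degree-$5$ part in $t$ from the remainder term then fixes $a_1$, the $t^3$ and $t^2$ coefficients of $b$, and the normalization $c_0=1$. The free constants left are exactly $a_0,b_0,b_1$.

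\textbf{Step 3 (recovering $\mcQ$).} By $(\star)$, $u_{\mcQ}$ is recovered as $-(v_{C_3}^2-f_{\mcL})/(x-t^2)$. This division is exact because $v_{C_3}^2-f_{\mcL}$ vanishes to the required order along the conic, by property (iii) of Proposition~\ref{prop:mumford}.

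\textbf{Step 4 (converse).} Conversely, every parameter value yields a pair satisfying $(\star)$. The bitangency and cubic conditions hold automatically, since $\mcL$ then appears in the pencil spanned by $2C_3$ and $\mcQ+C_2$. Genericity conditions (smoothness of $\mcQ$, twelve distinct tangent points, condition $(\ast)$) cut out a Zariski-open subset of the parameter space. So the parametrization is onto, modulo $PGL_3$, from an open subset of $\CC^6$.

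\textbf{Case (II).} Use Proposition~\ref{prop:salmon}(II) and the normalization (i)--(ii) of Section~\ref{sec:construction}, with parameters $r_1,r_2,s$. Replace $P_{C_o}$ by $2P_L$, which has the Mumford representation $((x+1)^2,v_{2L})$, and run the same degree count with $q_2=a_0x+b_0+b_1t$.

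\textbf{Main obstacle.} The hard step is the uniqueness and compatibility behind ``up to projective transformation.'' One must verify that the normalization uses up all of $PGL_3$, which has dimension $8$: three tangents plus a conic, or four lines plus the collinear line. The residual finite ambiguities (labeling of lines, choice of $z_o$, the sign of $\mcQ^{\pm}$, and the choice of $\sqrt{r_1r_2}$) change nothing essential. I would first check the dimension count: the locus in $\PP^{65}$ should have dimension $6+8=14$. I would then confirm that the degree constraints in Step 2 are independent, so that exactly three free constants survive.
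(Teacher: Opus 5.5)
Your overall route is the paper's. You normalize coordinates, use $\red(\fd_{\mcQ^+})=\fd_{C_2^-}$ (resp.\ $2L^-$) to fix $v_{C_3}$ modulo $u_{C_2}$, count degrees to leave $a_0,b_0,b_1$, and divide to recover $u_{\mcQ}$. Your converse (Step~4) and the dimension check $6+8=14$ are sound additions to what the paper merely asserts.

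However, Step~1 contains a false claim that contradicts Step~2. A conic with three ordered tangents already fixes the projective transformation, so $z_o$ cannot also be prescribed. In the normalized coordinates, $z_o=[0,1,0]$ lies on $C_o\colon XZ-T^2=0$; it is exactly the point where $L_\infty$ touches $C_o$. Evaluating the pencil relation $V^2-c\,Q\,C_o=Z\prod_{i=1}^5L_i$ at $z_o$ gives $V(z_o)=0$. Hence $z_o\in C_2\cap C_3$, as the paper notes when it says $C_3$ passes through $[0,1,0]$. No relabeling helps, since every line of $\mcL$ touches $C_o$. This is not cosmetic: $z_o\in C_2\cap C_3$ is precisely why $u_{C_2}=x-t^2$ is linear in $x$ and $v_{C_3}$ has no $x^3$-term, and your Step~2 uses both facts. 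With $z_o$ off $C_2\cup C_3$, as you claim, those formulas would be wrong.

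The repair is easy. Do not import Case (a) of Section~\ref{subsubsec:double_cover_hexad} together with its hypothesis $z_o\notin C_2\cup C_3$; redo it in the new coordinates. There $(\star)$ becomes $v_{C_3}^2+u_{\mcQ}(x-t^2)=f_{\mcL}$. The constant is $-1$, forced by the $x^5$-coefficients, and this is the source of the sign in your Step~3; ``$c_0=1$'' has no meaning here. Then $\divi([y-v_{C_3}])=\fd_{\mcQ^+}+P_{C_o}-5O$, so $\red(\fd_{\mcQ^+})=\iota^*P_{C_o}$. More directly, restricting the pencil relation to $C_o$ gives $v_{C_3}(t,t^2)^2=f_{\mcL}(t,t^2)=\bigl(t(t-1)\prod_{i=1}^3(t-t_i)\bigr)^2$, which is all Step~2 needs.

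Likewise, in Case (II) $z_o=[0,1,0]$ is forced by the normalization: it is the harmonic conjugate of $L_5\cap L_\infty$ with respect to $L_3\cap L_\infty$ and $L_4\cap L_\infty$. So ``$z_o$ is not a tangent point'' is an open condition on the six parameters, as in the paper's condition (iii). It is not something obtained by choosing $z_o$ generically on $L_\infty$.
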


We end this section by giving explicit examples.

\begin{ex}\label{ex:caseI} {An example for Case (I).  Put $t_1 = -2, t_2 = -1, t_3 = 2$, $a_0 = 1,b_0 = -1, b_1 = 2$. Then we have
\begin{eqnarray*}
f_{\mcL}(t,x) & = & l_4l_5l_1l_2l_3 = x(x - 2t +1)(x + 4t + 4)(x + 2t + 1)(x - 4t + 4), \\
v_{C_3}(t, x) &=& (1+t)x^2 - (t-1)^2x - 7t^3 + t^2 + 4t, \\
u_{\mcQ}(t, x) &=& x^4 + (-2t + 9)x^3 + (-13t^2 - 2t + 35)x^2 + (14t^3 - 45t^2 - 4t + 39)x \\
& &  + 49t^4 - 14t^3 - 55t^2 + 8t + 16.  
\end{eqnarray*}
Straightforward computation with a  computer algebraic system, e.g., Maple,  shows that the above equation gives an
example for Case (I).
}    
\end{ex}

\begin{ex}\label{ex:caseII}{An example for Case (II). Put $r_1 = 1, r_2 = 4,s = 1$, $a_0 =1, b_0 = 2, b_1 = 1$. Then we have
\begin{eqnarray*}
    f_{\mcL}(t, x) & =& l_5l_4l_3l_2l_1 =  x(x -2t  + 1)(x + 2t  + 1)(x - t + 2)(x - 4t + 5), \\
    v_{C3}(t, x) & =& x^3 + (t + 4)x^2 + (2t^2 + \frac{5}{2}t + 5)x - 2t^2 + \frac{11}{2}t + 2, \\
    u_{\mcQ}(t, x)&=& x^4 + (2t + 5)x^3 + (5t^2 + 14t + 6)x^2 + (4t^3 + 7t^2 + 34t + 2)x  \\
    &  & + 4t^4 - 22t^3 + \frac{89}{4}t^2 + 22t + 4.
\end{eqnarray*}
  Again straightforward computations with a computer algebra system shows that the above equation gives an
example for Case (II).
}

\end{ex}





\section{Proof of Theorem~\ref{thm:main2}}
\label{sec:proof_theorem}




 In this section, we prove Theorem~\ref{thm:main2}. Let $\mcD_1$ (resp. $\mcD_2$) be the subset of curves of $\mcR(\Cmb_{\mcQ\mcL})^{C_3}$ satisfying (I) (resp. (II)) in Theorem \ref{thm:main2}. From our observation in the previous section, up to projective transformation, both of $\mcD_1$ and
$\mcD_2$ are parametrized by Zariski open sets in $\CC^6$. Hence $\mcD_i$ ($i = 1, 2$) are connected.

We prepare some terminology and notation to discuss the embedded topology of curves in $\mcD_i$.
Given a smooth quartic $\mcQ$, a {\it syzygetic} triad of bitangents is a set of three bitangent lines of $\mcQ$ such that there exists a conic through the six tangent points. On the other hand if such a conic does not exist, the triad is said to be {\it azygetic}.
For $B_2=\mcQ+\mcL\in\mcD_2$, there exsits a line $L$ that passes through three distinct nodes of $\mcL$. We label the six lines in $\mcL$ so that the intersections $L_1\cap L_2$, $L_3\cap L_4$, $L_5\cap L_6$ are the three nodes. Now we have the following:

\begin{lem}\label{lem:triads} 
Under the above setting, the  following holds for curves in $\mcD_i$:
\begin{enumerate}
    \item If $B_1\in \mcD_1$, all triads of bitangents of $B_1$ are azygetic. 
    \item If $B_2\in \mcD_2$, then $B_2$ has exactly eight syzygetic triads. The eight triads are formed by choosing one of the two lines passing through each node. Moreover, there are four ways to partition the six bitangents into pairs of syzygetic triads.
\end{enumerate}
\end{lem}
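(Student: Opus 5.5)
The plan is to reuse the double cover argument from the proof of Theorem~\ref{thm:main1}, now applied to three lines instead of six. A triad $L_i, L_j, L_k$ of bitangents of $\mcQ$ is syzygetic exactly when there is a conic $C'$ through the six tangent points. In that case the pencil spanned by $2C'$ and $L_iL_jL_k$ contains a member $\mcQ + L'$, where $L'$ is a line. The identity to work with is
\[
v'^2 - u_{\mcQ}\,l' = c\,l_il_jl_k ,
\]
with $v'$ the equation of $C'$ and $l'$ that of $L'$. It says that $\mcQ$ and $L'$ split in the double cover branched along $L_i+L_j+L_k+L_m+L_n$ if and only if they split in the cover branched along the triad together with the remaining pair $L_m+L_n$. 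Rather than working with covers, though, I would use the following elementary consequence directly. Every intersection point of $L'$ with $L_i+L_j+L_k$ has even multiplicity. Since $L'$ is a line, it can do this only by passing through nodes of the triad, or by being one of $L_i, L_j, L_k$.

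\textbf{Step 1: the possibilities for $L'$.} As in the proof of Theorem~\ref{thm:main1}, $L'$ cannot be one of the three lines. Otherwise $C'$ would contain that line, contradicting the assumption that the twelve tangent points are distinct. So $L'$ meets the triangle $L_iL_jL_k$ with even multiplicity everywhere, which forces $L'$ to pass through nodes of the triangle. A line not in the triangle passes through at most one of its three vertices. Hence $L'$ passes through exactly one vertex, say $L_i\cap L_j$, and meets $L_k$ at a single point with multiplicity $1$. That multiplicity is odd, a contradiction. So this naive version rules out every triad, and the argument needs correcting.

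\textbf{Step 1, corrected.} The right relation comes from comparing with the cubic $C_3$ through all twelve tangent points. Dividing, $C_3$ and $C'$ give a conic-type relation on the complementary triad $L_m, L_n, L_p$. Concretely, I would write $v_{C_3}^2 - u_{\mcQ}u_{C_2} = f_{\mcL}$ and $v'^2 - u_{\mcQ}\,l' = l_il_jl_k$. Modulo $u_{\mcQ}$ these give
\[
v_{C_3}^2\,l_il_jl_k \equiv v'^2 f_{\mcL} \pmod{u_{\mcQ}},
\]
so $(v_{C_3}/v')^2 \equiv l_ml_nl_p$ on $\mcQ$. Adjusting by $\mcQ$, this produces a relation
\[
w^2 - u_{\mcQ}\,h = (\text{const})\,l_ml_nl_p\,l'u_{C_2}\quad\text{or similar}.
\]
The consequence I want is that $L' + C_2$ becomes a square modulo the triad. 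In particular, $C_2$ together with the line $L'$ must split with respect to the triad.

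\textbf{Step 2: Case (I).} Here $C_2$ is a smooth conic tangent to all six lines. The splitting condition above then forces $L'$ to meet each line of the triad only at nodes, or at points where the tangency of $C_2$ compensates. Counting the parity of intersections, as in Case (a) of the proof of Theorem~\ref{thm:main1}, should show that no such $L'$ exists. If needed, I would make this rigorous with the $\mathrm{Corr}_v$ integrality argument (Remark~\ref{rem:coefficents}) applied to the genus-$2$ fibration. This gives (1).

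\textbf{Step 3: Case (II).} Here $C_2 = 2L$, so the condition reduces to $L'$ alone being square modulo $L_il_jl_k$-type data. This happens exactly when each line of the triad meets $L'$ at a node lying on $L$, which means the triad picks one line through each of the three nodes on $L$. That gives $2^3 = 8$ triads. Conversely, I would exhibit the conic $C'$ explicitly using the Mumford representation of $P_L$. Its complementary triad is also of this form, so the $8$ triads pair up into $4$ partitions of the six lines.

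\textbf{Main obstacle.} I expect the hardest step to be deriving the correct relation between $C_3$, $C'$ and the complementary triad. As the failed first attempt shows, the naive parity argument is misapplied, so the real content lies in getting the splitting statement right.
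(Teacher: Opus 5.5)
Your proposal has a genuine gap: neither part of the lemma is proved, and the argument starts from an identity that cannot hold.

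\textbf{The starting identity is wrong.} In $v'^2-u_{\mcQ}\,l'=c\,l_il_jl_k$ the three terms have degrees $4$, $5$ and $3$. So there is no pencil spanned by $2C'$ and a triangle that could contain $\mcQ+L'$.

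The correct classical fact is that a triad is syzygetic if and only if it extends to a syzygetic tetrad. The conic $C'$ through the six tangency points meets $\mcQ$ in two further points. These are the tangency points of a fourth bitangent $L'$, and $u_{\mcQ}$ is a linear combination of $l_il_jl_kl'$ and $c'^2$. With that identity, nothing forces $L'$ through the vertices of the triangle. So the contradiction in your first Step~1 is an artifact of the wrong equation, not of a misapplied parity argument.

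Your ``corrected'' Step~1 inherits the error. Your identity would make the cubic $l_il_jl_k$ a square on $\mcQ$, which is impossible by degree. What you can legitimately extract uses the correct tetrad identity together with $v_{C_3}^2\equiv f_{\mcL}$ modulo $u_{\mcQ}$ (after homogenizing, so that $f_{\mcL}$ includes $l_\infty$). This gives that $l_ml_nl_pl'$ is a square on $\mcQ$. Hence a triad is syzygetic if and only if its complementary triad is, with the same fourth bitangent. That explains why syzygetic triads come in complementary pairs, but it says nothing about how many there are. Also, $C_2$ disappears modulo $u_{\mcQ}$, so nothing you wrote yields a splitting statement for $C_2+L'$ with respect to the triad.

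\textbf{The core claims are only asserted.} The content of the lemma is that there are no syzygetic triads in Case (I), and that in Case (II) exactly the eight triads taking one line through each node on $L$ are syzygetic while the other twelve are not. Steps~2 and~3 only assert this (``should show'', ``if needed I would make this rigorous''). The tools you invoke do not reach this question:
\begin{itemize}
\item The integrality argument of Remark~\ref{rem:coefficents} lives on the genus-$2$ fibration of the double cover branched along all six lines. It constrains how the residual conic $C_2$ passes through nodes of $\mcL$.
\item Whether the tangency points of a sub-triad lie on a conic is a question about the double cover branched along $\mcQ$, or along a tetrad $T+L'$.
\item The Mumford representation of $P_L$ does not visibly produce the conic $C'$ for a transversal triad.
\end{itemize}

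\textbf{The missing idea is the paper's reduction.} By Shirane's theorem, a triad is syzygetic exactly when its connected number with respect to the double cover branched along $\mcQ$ equals $2$. This number is a topological invariant, unchanged under equisingular deformation. Each $\mcD_i$ is connected, being parametrized by a Zariski open subset of $\CC^6$ (Proposition~\ref{prop:parameter}). So it suffices to check the twenty triads for the explicit curves of Examples~\ref{ex:caseI} and~\ref{ex:caseII}, which the paper does by computer.

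Without such a reduction you would need an honest classification of the hexads. One possible route is via theta characteristics: $\{L_i,L_j,L_k\}$ is syzygetic if and only if $\theta_i+\theta_j+\theta_k-K$ is an odd theta characteristic, and the cubic condition reads $\sum\theta_i\sim 3K$. Nothing of that kind is present in your proposal.
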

\begin{proof}
By \cite[Theorem 2.4]{shirane18}, the existence (resp. non-existence) of the conic for a triad is equivalent to the  connected number of the triad with respect to the double cover branched along $\mcQ$ being $2$ (resp. $1$). (See \cite{shirane18} for the definition and details of connected numbers.) The connected number is a topological invariant and invariant under deformation, so the property for a triad to be syzygetic/azygetic is also invariant under deformation. Furthermore, since $\mcD_i$ ($i=1, 2$) are connected, it is enough to prove the statement for explicit examples. 
Let $B_1$ and $B_2$ be the curves given in Examples~\ref{ex:caseI} and \ref{ex:caseII}, 
respectively.
By direct calculations using a computer algebra system, the statements can be confirmed.
Moreover, for $B_2$, once we relabel $L_\infty$ to $L_6$, it can be confirmed that a line $L$ passes through $L_1\cap L_2$, $L_3\cap L_4$, $L_5\cap L_6$ and the eight triads $\{L_i, L_j, L_k\}$   $(i, j, k)=(1, 3, 5)$, $(2, 4, 6)$, $(1, 3, 6)$, $(2, 4, 5)$, $(1, 4, 5)$, $(2, 3, 6)$, $(1, 4, 6)$, $(2, 3, 5)$ are syzygetic.
\end{proof}

\begin{rem}
The bitangent lines of a quartic are related to the {\it theta characteristics} of the quartic. The invariance of the property of a triad being syzygetic/azygetic under deformation can be explained in terms of the invariance of the configuration of theta characteristics of $\mcQ$ also.
\end{rem}

The above Lemma~\ref{lem:triads} is the key to distinguishing the embedded topology of curves in $\mcD_i$.
\begin{prop}~\label{prop:zariski_pair}
Let $B_1\in \mcD_1$ and $B_2\in \mcD_2$. Then $(B_1, B_2)$ is a Zariski pair.
\end{prop}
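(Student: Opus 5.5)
We argue by contradiction: suppose there is a homeomorphism $h:(\PP^2, B_1)\to(\PP^2, B_2)$. First we check that $B_1$ and $B_2$ have the same combinatorics, so the pair is a genuine candidate Zariski pair. Both are smooth quartics with a hexad of ordinary bitangents satisfying $(\ast)$, and in both the twelve tangency points lie on a cubic.

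Next, $h$ must respect the irreducible components and their degrees. The component types are rigid: $\mcQ$ is the unique component of positive genus, and the six lines are the rational components. Indeed, $h$ maps the singular set $\mcQ\cap\mcL$ together with the nodes of $\mcL$ to the corresponding set of $B_2$, and it maps irreducible components to irreducible components. Since the genus of a component is a topological invariant, we get $h(\mcQ_1)=\mcQ_2$ and $h(\mcL_1)=\mcL_2$, and $h$ induces a bijection between the six lines of $B_1$ and those of $B_2$. Consequently $h$ restricts to a homeomorphism $(\PP^2,\mcQ_1)\to(\PP^2,\mcQ_2)$ that carries each triad of bitangents in $\mcL_1$ to a triad in $\mcL_2$.

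The key step is to use the topological invariance of the syzygetic/azygetic property, via the connected number of \cite{shirane18} as in the proof of Lemma~\ref{lem:triads}. A triad $\{L_i,L_j,L_k\}$ is syzygetic exactly when its connected number with respect to the double cover branched along $\mcQ$ equals $2$. We must justify that this number is preserved by $h$. The plan is to note that the connected number is defined through the preimage of $L_i+L_j+L_k$ in the double cover of $\PP^2$ branched along $\mcQ$. Since $h(\mcQ_1)=\mcQ_2$, $h$ lifts, possibly after composing with the covering involution, to a homeomorphism of the double covers, at least of the unramified parts over complements. This lift maps the preimage of a triad onto the preimage of its image triad, so the number of connected components is preserved. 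This invariance statement, \cite[Theorem 2.4]{shirane18} together with the invariance under homeomorphisms of pairs that preserve the branch curve, is where I expect the main care to be needed, and I would cite Shirane's result that connected numbers are invariants of the embedded topology.

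Given this, $h$ sends syzygetic triads of $B_1$ to syzygetic triads of $B_2$, and it does the same with its inverse. So the number of syzygetic triads among the $\binom{6}{3}=20$ triads is an invariant of the pair. By Lemma~\ref{lem:triads}, this number is $0$ for $B_1$ and $8$ for $B_2$, which is a contradiction. Hence no such homeomorphism exists and $(B_1,B_2)$ is a Zariski pair.
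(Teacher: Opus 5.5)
Your proposal is correct and follows essentially the same route as the paper. The paper's proof also uses \cite[Theorem 2.4]{shirane18} to turn the syzygetic/azygetic dichotomy of Lemma~\ref{lem:triads} into a difference of connected numbers for the double cover branched along $\mcQ$, and then appeals to their topological invariance; you just make explicit what the paper leaves implicit, namely that a homeomorphism of pairs sends $\mcQ_1$ to $\mcQ_2$ (by genus) and triads of lines to triads, and lifts to the double covers so that connected numbers are preserved.
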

\begin{proof}
By \cite[Theorem 2.4]{shirane18}, the difference in the existence/non-existence of the conics implies the difference in the connected numbers of the triads with respect to the double cover branched along $\mcQ$. The connected number is a topological invariant, so $(B_1, B_2)$ is a Zariski pair. 
\end{proof}




\begin{rem}
\begin{enumerate}
    \item[(i)] Let $\mcB$ be a curve in $\mcD_1$. Then any four of the  bitangent lines form a azygetic tetrad.  Given a tetrad of lines, by considering the pencil generated by a double conic $2C$ and the tetrad $L_1+L_2+L_3+L_4$, it is easy to construct a quartic that have the four lines as a syzygetic tetrad of bitangents,  but it is relatively difficult for the azygetic case.
Our construction gives such  explicit examples.
    \item[(ii)] For $\mcQ + \mcL \in 
    \mcR(\Cmb_{\mcQ\mcL})$, $\mcQ + \mcL \in \mcR(\Cmb_{\mcQ\mcL})^{C_3}$ if and only if $\mcQ$ is
    splitting with respect to $\pi_{\mcL}$ by Remark~\ref{rem:splitting}.  Hence by \cite{shimada22} we infer that $\mcD_i (i = 1, 2)$ are
    connected components of $\mcR(\Cmb_{\mcQ\mcL})$.
    \item[(iii)]  There are many ways to formulate corollaries from Lemma~\ref{lem:triads}. For example, we can say that for $B=\mcQ+\mcL\in \mcR(\Cmb_{\mcQ\mcL})^{C_3}$, if $B$ has a syzygetic triad
    then the complementary triad is also syzygetic, and there exists a line passing through three of the nodes of $\mcL$.
\end{enumerate}
\end{rem}

\section*{Declaration of AI usage}
The authors used generative AI to draw the configuration of singular fibers in Figure~\ref{figure:singular_fibers}.

\bibliographystyle{spmpsci}
\bibliography{biblio.bib}

@article{absst,
      title={The realization space of a certain conic
            line arrangement of degree $7$ and 
             a $\pi_1$-equivalent {Z}ariski pair}, 
     author={Amram, Meirav and Bannai, Shinzo and
            Shirane, Taketo and Sinichkin, Uriel and Tokunaga, Hiro-o},
      year={2026},
       JOURNAL = {Israel J. of Math.},
  FJOURNAL = {Israel Journal of Mathematics},
       NUMBER = {273},
     PAGES = {901--935},
       DOI = {https://doi.org/10.1007/s11856-025-2860-9},
      
}

@incollection{survey,
    AUTHOR = {Artal Bartolo, Enrique and Cogolludo, Jos\'{e} Ignacio and Tokunaga, Hiro-o},
     TITLE = {A survey on {Z}ariski pairs},
 BOOKTITLE = {Algebraic geometry in {E}ast {A}sia---{H}anoi 2005},
    SERIES = {Adv. Stud. Pure Math.},
    VOLUME = {50},
     PAGES = {1--100},
 PUBLISHER = {Math. Soc. Japan, Tokyo},
      YEAR = {2008},
   MRCLASS = {14H50 (20F36)},
  MRNUMBER = {2409555},
MRREVIEWER = {Meirav Topol-Amram},
       DOI = {10.2969/aspm/05010001},
}

@article {artal94,
    AUTHOR = {Artal Bartolo, Enrique},
     TITLE = {Sur les couples de {Z}ariski},
   JOURNAL = {J. Algebraic Geom.},
  FJOURNAL = {Journal of Algebraic Geometry},
    VOLUME = {3},
      YEAR = {1994},
    NUMBER = {2},
     PAGES = {223--247},
      ISSN = {1056-3911},
   MRCLASS = {14H30 (14E20)},
  MRNUMBER = {1257321},
MRREVIEWER = {Mutsuo Oka},
}

@article {abst2023-1,
    AUTHOR = {Artal Bartolo, E. and Bannai, Sh. and Shirane, T. and Tokunaga, H.},
     TITLE = {Torsion divisors of plane curves and {Z}ariski pairs},
      NOTE = {Translated from Algebra i Analiz {\bf 34} (2022), no. 5},
   JOURNAL = {St. Petersburg Math. J.},
  FJOURNAL = {St. Petersburg Mathematical Journal},
    VOLUME = {34},
      YEAR = {2023},
    NUMBER = {5},
     PAGES = {721--736},
      ISSN = {1061-0022,1547-7371},
   MRCLASS = {14H50},
  MRNUMBER = {4673793},
MRREVIEWER = {Haohao\ Wang},
       DOI = {10.1090/spmj/1776},
}

@article {bannai_ohno,
    AUTHOR = {Bannai, Shinzo and Ohno, Momoko},
     TITLE = {Two-graphs and the embedded topology of smooth quartics and
              its bitangent lines},
   JOURNAL = {Canad. Math. Bull.},
  FJOURNAL = {Canadian Mathematical Bulletin. Bulletin Canadien de
              Math\'ematiques},
    VOLUME = {63},
      YEAR = {2020},
    NUMBER = {4},
     PAGES = {802--812},
      ISSN = {0008-4395,1496-4287},
   MRCLASS = {14H30 (05C65)},
  MRNUMBER = {4176770},
MRREVIEWER = {Juan\ C.\ Migliore},
       DOI = {10.4153/s0008439520000053},
       URL = {https://doi.org/10.4153/s0008439520000053},
}

@article{bst24,
      title={Arithmetic of double covers and the topology of plane curves}, 
     author={Bannai, Shinzo and
            Shirane, Taketo  and Tokunaga, Hiro-o},
      Journal={Contemp. Math.},
      Fjournal = {Contemporary Mathematics},
      Year={2026},
      Volume={843},
      Pages = {83--107},
}

@article {bannai-tokunaga20-2,
    AUTHOR = {Bannai, Shinzo and Tokunaga, Hiro-o},
     TITLE = {Elliptic surfaces of rank one and the topology of cubic-line
              arrangements},
   JOURNAL = {J. Number Theory},
  FJOURNAL = {Journal of Number Theory},
    VOLUME = {221},
      YEAR = {2021},
     PAGES = {174--189},
      ISSN = {0022-314X},
   MRCLASS = {14J27 (14E20 14H52 57M12)},
  MRNUMBER = {4203565},
       DOI = {10.1016/j.jnt.2020.06.005},
}

@article {bty20,
    AUTHOR = {Bannai, Shinzo and Tokunaga, Hiro-o and Yamamoto, Momoko},
     TITLE = {Rational points of elliptic surfaces and the topology of
              cubic-line, cubic-conic-line arrangements},
   JOURNAL = {Hokkaido Math. J.},
  FJOURNAL = {Hokkaido Mathematical Journal},
    VOLUME = {49},
      YEAR = {2020},
    NUMBER = {1},
     PAGES = {87--108},
      ISSN = {0385-4035},
   MRCLASS = {14J27 (14H30 14H50)},
  MRNUMBER = {4105537},
       DOI = {10.14492/hokmj/1591085013},
}

@article{bty25,
      title={The realization spaces of certain conic-line arrangements of degree 7}, 
     author={Bannai, Shinzo and Tokunaga, Hiro-0 and Yorisaki, Emiko},
    journal={J. Math. Soc. Japan},
      year={2025},
      URL={https://www.mathsoc.jp/publication/JMSJ/pdf/JMSJ9475.pdf},
       eprint={}, 
archivePrefix={},
      primaryClass={math.AG}
}

@article {cantor87,
    AUTHOR = {Cantor, David G.},
     TITLE = {Computing in the {J}acobian of a hyperelliptic curve},
   JOURNAL = {Math. Comp.},
  FJOURNAL = {Mathematics of Computation},
    VOLUME = {48},
      YEAR = {1987},
    NUMBER = {177},
     PAGES = {95--101},
      ISSN = {0025-5718},
   MRCLASS = {11Y16 (11G20 14H25 14H40)},
  MRNUMBER = {866101},
MRREVIEWER = {Kevin S. McCurley},
       DOI = {10.2307/2007876},
}

@incollection {catanese-ciliberto,
    AUTHOR = {Catanese, F. and Ciliberto, C.},
     TITLE = {On the irregularity of cyclic coverings of algebraic surfaces},
 BOOKTITLE = {Geometry of complex projective varieties ({C}etraro, 1990)},
    SERIES = {Sem. Conf.},
    VOLUME = {9},
     PAGES = {89--115},
 PUBLISHER = {Mediterranean, Rende},
      YEAR = {1993},
   MRCLASS = {14E20 (14J40)},
  MRNUMBER = {1225590},
MRREVIEWER = {F.\ E. A. Johnson},
}

@book {dolgachev
,
    AUTHOR = {Dolgachev, Igor V.},
     TITLE = {Classical algebraic geometry},
      NOTE = {},
 PUBLISHER = {Cambridge University Press, Cambridge},
      YEAR = {2012},
     PAGES = {xii+639},
      ISBN = {978-1-107-01765-8},
   MRCLASS = {14-02 (14-01)},
  MRNUMBER = {2964027},
MRREVIEWER = {Arnaud\ Beauville},
       DOI = {10.1017/CBO9781139084437},
       URL = {https://doi.org/10.1017/CBO9781139084437},
}

@book {galbraith,
    AUTHOR = {Galbraith, Steven D.},
     TITLE = {Mathematics of public key cryptography},
 PUBLISHER = {Cambridge University Press, Cambridge},
      YEAR = {2012},
     PAGES = {xiv+615},
      ISBN = {978-1-107-01392-6},
   MRCLASS = {94-02 (68P25 94A60)},
  MRNUMBER = {2931758},
MRREVIEWER = {Jos\'{e} Ignacio Farr\'{a}n},
       DOI = {10.1017/CBO9781139012843},
}

@article {horikawa,
    AUTHOR = {Horikawa, Eiji},
     TITLE = {On deformations of quintic surfaces},
   JOURNAL = {Invent. Math.},
  FJOURNAL = {Inventiones Mathematicae},
    VOLUME = {31},
      YEAR = {1975},
    NUMBER = {1},
     PAGES = {43--85},
      ISSN = {0020-9910},
   MRCLASS = {DML},
  MRNUMBER = {1573789},
       DOI = {10.1007/BF01389865},
}

@book {iitaka,
    AUTHOR = {Iitaka, Shigeru},
     TITLE = {Algebraic geometry},
    SERIES = {Graduate Texts in Mathematics},
    VOLUME = {76},
      NOTE = {An introduction to birational geometry of algebraic varieties,
              North-Holland Mathematical Library, 24},
 PUBLISHER = {Springer-Verlag, New York-Berlin},
      YEAR = {1982},
     PAGES = {x+357},
      ISBN = {0-387-90546-4},
   MRCLASS = {14-01 (14-02 14E05)},
  MRNUMBER = {637060},
MRREVIEWER = {Werner Kleinert},
}

@book {mumfordII,
    AUTHOR = {Mumford, David},
     TITLE = {Tata lectures on theta. {II}},
    SERIES = {Modern Birkh\"{a}user Classics},
      NOTE = {Jacobian theta functions and differential equations,
              With the collaboration of C. Musili, M. Nori, E. Previato, M.
              Stillman and H. Umemura,
              Reprint of the 1984 original},
 PUBLISHER = {Birkh\"{a}user Boston, Inc., Boston, MA},
      YEAR = {2007},
     PAGES = {xiv+272},
      ISBN = {978-0-8176-4569-4; 0-8176-4569-1},
   MRCLASS = {14K25 (14H40 32G20)},
  MRNUMBER = {2307768},
       DOI = {10.1007/978-0-8176-4578-6},
}

@incollection {MWZ,
    AUTHOR = {Menezes, Alfred J. and Wu, Yi-Hong and Zuccherato, Robert J.},
     TITLE = {An Elementary Introductio to Hyperelliptic curves},
 BOOKTITLE = {N. Koblitz: Algebraic Aspects of Cryptography},
     PAGES = {157--178},
 PUBLISHER = {Springer-Verlag, Berlin},
      YEAR = {1998},
}

@article {namikawa_ueno73,
    AUTHOR = {Namikawa, Yukihiko and Ueno, Kenji},
     TITLE = {The complete classification of fibres in pencils of curves of
              genus two},
   JOURNAL = {Manuscripta Math.},
  FJOURNAL = {Manuscripta Mathematica},
    VOLUME = {9},
      YEAR = {1973},
     PAGES = {143--186},
      ISSN = {0025-2611,1432-1785},
   MRCLASS = {14D05 (14H20 32J15)},
  MRNUMBER = {369362},
MRREVIEWER = {W.-D.\ Geyer},
       DOI = {10.1007/BF01297652},
       URL = {https://doi.org/10.1007/BF01297652},
}

@book {qing_liu,
    AUTHOR = {Liu, Qing},
     TITLE = {Algebraic geometry and arithmetic curves},
    SERIES = {Oxford Graduate Texts in Mathematics},
    VOLUME = {6},
      NOTE = {Translated from the French by Reinie Ern\'e,
              Oxford Science Publications},
 PUBLISHER = {Oxford University Press, Oxford},
      YEAR = {2002},
     PAGES = {xvi+576},
      ISBN = {0-19-850284-2},
   MRCLASS = {14-01 (11G30 14A05 14A15 14Gxx 14Hxx)},
  MRNUMBER = {1917232},
MRREVIEWER = {C\'icero\ Carvalho},
}

@article {shimada22,
    AUTHOR = {Shimada, Ichiro},
     TITLE = {Zariski multiples associated with quartic curves},
   JOURNAL = {J. Singul.},
  FJOURNAL = {Journal of Singularities},
    VOLUME = {24},
      YEAR = {2022},
     PAGES = {169--189},
      ISSN = {1949-2006},
   MRCLASS = {14H50 (14J26)},
  MRNUMBER = {4495956},
MRREVIEWER = {Krishna\ Hanumanthu},
       DOI = {10.5427/jsing.2022.24g},
       URL = {https://doi.org/10.5427/jsing.2022.24g},
}

@article {shioda90,
    AUTHOR = {Shioda, Tetsuji},
     TITLE = {On the {M}ordell-{W}eil lattices},
   JOURNAL = {Comment. Math. Univ. St. Paul.},
  FJOURNAL = {Commentarii Mathematici Universitatis Sancti Pauli},
    VOLUME = {39},
      YEAR = {1990},
    NUMBER = {2},
     PAGES = {211--240},
      ISSN = {0010-258X},
   MRCLASS = {14J27 (11G05 14C17 14D10 14J05 14J20)},
  MRNUMBER = {1081832},
MRREVIEWER = {Joseph H. Silverman},
}

@article {shioda-higher-genus1,
    AUTHOR = {Shioda, Tetsuji},
     TITLE = {Mordell-{W}eil lattices for higher genus fibration},
   JOURNAL = {Proc. Japan Acad. Ser. A Math. Sci.},
  FJOURNAL = {Japan Academy. Proceedings. Series A. Mathematical Sciences},
    VOLUME = {68},
      YEAR = {1992},
    NUMBER = {9},
     PAGES = {247--250},
      ISSN = {0386-2194},
   MRCLASS = {14H40 (14H05)},
  MRNUMBER = {1202625},
MRREVIEWER = {Min\ Ho\ Lee},
       URL = {http://projecteuclid.org/euclid.pja/1195511629},
}

@incollection {shioda-higher-genus2,
    AUTHOR = {Shioda, Tetsuji},
     TITLE = {Mordell-{W}eil lattices for higher genus fibration over a
              curve},
 BOOKTITLE = {New trends in algebraic geometry ({W}arwick, 1996)},
    SERIES = {London Math. Soc. Lecture Note Ser.},
    VOLUME = {264},
     PAGES = {359--373},
 PUBLISHER = {Cambridge Univ. Press, Cambridge},
      YEAR = {1999},
      ISBN = {0-521-64659-6},
   MRCLASS = {11G10 (11G30 14H40)},
  MRNUMBER = {1714831},
MRREVIEWER = {Takashi\ Ichikawa},
       DOI = {10.1017/CBO9780511721540.015},
       URL = {https://doi.org/10.1017/CBO9780511721540.015},
}

@article {shirane17,
    AUTHOR = {Shirane, Taketo},
     TITLE = {A note on splitting numbers for {G}alois covers and
              {$\pi_1$}-equivalent {Z}ariski {$k$}-plets},
   JOURNAL = {Proc. Amer. Math. Soc.},
  FJOURNAL = {Proceedings of the American Mathematical Society},
    VOLUME = {145},
      YEAR = {2017},
    NUMBER = {3},
     PAGES = {1009--1017},
      ISSN = {0002-9939,1088-6826},
   MRCLASS = {14E20 (14F45 14H30 14H50 32S15 57M12)},
  MRNUMBER = {3589301},
MRREVIEWER = {Francesca\ Vetro},
       DOI = {10.1090/proc/13298},
       URL = {https://doi.org/10.1090/proc/13298},
}

@article {shirane18,
    AUTHOR = {Shirane, Taketo},
     TITLE = {Connected numbers and the embedded topology of plane curves},
   JOURNAL = {Canad. Math. Bull.},
  FJOURNAL = {Canadian Mathematical Bulletin. Bulletin Canadien de
              Math\'{e}matiques},
    VOLUME = {61},
      YEAR = {2018},
    NUMBER = {3},
     PAGES = {650--658},
      ISSN = {0008-4395},
   MRCLASS = {14H30 (14F45 14H50)},
  MRNUMBER = {3831937},
MRREVIEWER = {Fei Ye},
       DOI = {10.4153/CMB-2017-066-5},
}

@article {shirane26,
    AUTHOR = {Shirane, Taketo},
     TITLE = {A note on combinatorial type and splitting invariants of plane curves},
   JOURNAL = {Canad. J. Math.},
  FJOURNAL = {Canadian Journal of Mathematical. Journal Canadien de
              Math\'{e}matiques},
    VOLUME = {On line, TBD},
      YEAR = {2026},
    NUMBER = {},
     PAGES = {},
      ISSN = {},
   MRCLASS = {},
  MRNUMBER = {},
MRREVIEWER = {},
       DOI = {10.4153/S0008414X26102259},
}

@article {takatoku20-2,
    AUTHOR = {Takahashi, Ai and Tokunaga, Hiro-o},
     TITLE = {Representations of divisors on hyperelliptic curves and plane
              curves with quasi-toric relations},
   JOURNAL = {Comment. Math. Univ. St. Pauli},
  FJOURNAL = {Commentarii Mathematici Universitatis Sancti Pauli},
    VOLUME = {70},
      YEAR = {2022},
     PAGES = {11--27},
      ISSN = {0010-258X},
   MRCLASS = {14G50 (13P10 14Q05)},
  MRNUMBER = {4527785},
  URL = {https://doi.org/10.14992/00022409}
}

@article {tokunaga10,
    AUTHOR = {Tokunaga, Hiro-o},
     TITLE = {Geometry of irreducible plane quartics and their quadratic
              residue conics},
   JOURNAL = {J. Singul.},
  FJOURNAL = {Journal of Singularities},
    VOLUME = {2},
      YEAR = {2010},
     PAGES = {170--190},
   MRCLASS = {14H50 (14H30)},
  MRNUMBER = {2763025},
MRREVIEWER = {Meirav Topol-Amram},
       DOI = {10.5427/jsing.2010.2k},
}

@book {salmon,
    AUTHOR = {Salmon, George},
     TITLE = {A treatise on the higher plane curves: intended as a sequel to
              ``{A} treatise on conic sections''},
      NOTE = {3rd ed},
 PUBLISHER = {Chelsea Publishing Co., New York},
      YEAR = {1960},
     PAGES = {xix+395},
   MRCLASS = {50.00 (53.00)},
  MRNUMBER = {115124},
}

\noindent Shinzo BANNAI\\
Department of Applied Mathematics, Faculty of Science, \\
Okayama University of Science, 
1-1 Ridai-cho, Kita-ku, Okayama 700-0005 JAPAN
\\
{\tt bannai@ous.ac.jp}

\medskip

\noindent Hiro-o TOKUNAGA\\
Department of Mathematical  Sciences, Graduate School of Science, \\
Tokyo Metropolitan University, 1-1 Minami-Ohsawa, Hachiohji 192-0397 JAPAN \\
{\tt tokunaga@tmu.ac.jp}
\end{document}